\long\def\symbolfootnote[#1]#2{\begingroup%
	\def\thefootnote{\fnsymbol{footnote}}\footnote[#1]{#2}\endgroup}
\newcommand{\Z}{\ensuremath{\mathcal{Z}}}
\def \N {{\mathbb{N}}}
\def \Z {{\mathbb{Z}}}
\def \al {{\alpha}}
\newtheorem*{theorem*}{Theorem}
\newtheorem{theorem}{Theorem}[section]
\newtheorem{cor}[theorem]{Corollary}
\newtheorem{lemma}[theorem]{Lemma}
\newtheorem{ex}[theorem]{Example}
\newtheorem*{ex*}{Example}
\newtheorem{pro}[theorem]{Proposition}
\def\imod#1{\allowbreak\mkern10mu({\operator@font mod}\,\,#1)}
\tikzstyle{vertex}=[circle, draw, inner sep=0pt, minimum size=6pt]
\numberwithin{equation}{section}
\newcommand{\ignore}[1]{}
\newcommand{\mynote}[1]{}
\begin{document}
	
	\title{CLT-Groups with Cyclic or Abelian Subgroups}
	
	\author{Khyati Sharma}
	\address{Shiv Nadar Institution of Eminence, NH-91, Dadri, Gautam Buddha Nagar}
	\email{khyatisharma0907@gmail.com}
	
	\author{A. satyanarayana Reddy}
	\address{Shiv Nadar Institution of Eminence, NH-91, Dadri, Gautam Buddha Nagar}
	\email{satya.a@snu.edu.in}
	
	\subjclass[2020]{20D10, 20D99}
	\today 
	\keywords{CLT-group, CLT number, Cyclic number, Abelian number}
	\begin{abstract}
A finite group is called a {\em CLT-group} if it contains a subgroup corresponding to every divisor of the order of the group. It is said to be a {\em Cyclic (Abelian) CLT group} if it contains a cyclic (abelian) subgroup corresponding to every proper divisor of the order of the group. A natural number is said to be a {\em CCLT (ACLT) number} if every group of that order is a cyclic (abelian) CLT group. In this work, we classify all CCLT and ACLT numbers and study various properties of Cyclic (Abelian) CLT groups. We also show that the classes of CCLT and ACLT groups are contained in the class of supersolvable groups. Moreover, we introduce the function {\em CCLT-degree} on the set of non-cyclic finite groups and study the properties of this function.

\end{abstract}
\maketitle

\section{Introduction}
\par One of the important theorems of group theory is {\em Lagrange's theorem}, which states that for a finite group $G$, the order of a subgroup divides the order of the group. The converse of Lagrange's theorem is not true because the alternating group with $4$ letters of order 12 has no subgroup of order $6$. This motivates the definition of a CLT-group and a CLT number. A finite group $G$, which satisfies the converse of Lagrange's theorem, is called a {\em CLT-group}. A natural number $n$ is said to be a {\em CLT number} if every group of order $n$ is a CLT-group. Berger~\cite{Berger} gave a number theoretic result to check when a natural number $n$ is CLT by using the properties of prime factorization of $n$. More results on CLT numbers can be seen in \cite{Baskaran1976, Nganon, Marius}.  CLT-groups are also studied by many authors for further information (see \cite{ballester2007some, MR2139516, brandl1987clt, brandl1989character, gagen1977note, heineken1995groups, liu2012clt, salunke2009converse}). 
\par The study of CLT groups is important as some interesting characterizations of other (familiar) classes of groups are obtained in terms of CLT groups. Holmes \cite{holmes1966characterization} proved that a finite group $G$ is nilpotent if and only if for every divisor of the order of $G$, there exists a normal subgroup of that order. W.E. Deskins~\cite{deskins1968characterization} showed that a group $G$ is supersolvable if and only if every subgroup of $G$ is CLT. Humphreys~\cite{MR0387403} proved that a group of odd order is supersolvable if all its factor groups are CLT. Hall in $1937$ proved that a finite group is solvable if and only if every Sylow subgroup has a complement. Therefore, every CLT group is solvable. 
\par There has been growing interest in this field. The following recent contributions heightened focus on this area. In $2009$, Li, He, Nong, and Zhou~\cite{li2009hall} studied a new class of CLT-groups. They introduced the following definition. A subgroup $H$ of a group $G$ is called {\em Hall normally embedded} in $G$ if $H$ is a Hall subgroup of the normal closure $H^G$. They completely characterized those groups $G$, which contain a Hall normally embedded subgroup for every divisor $d$ of the order of $G$. In $2012$, Liu, Li, and He~\cite{liu2012clt} studied and characterized one more class of CLT-groups known as $C$-groups. A group $G$ is called a {\em $C$-group} if, for each divisor $d$ of the order of $G$, $G$ contains a subgroup $H$ of order $d$ such that $H$ is either normal or abnormal in $G$. Shen, Brandl, Shi, and Chen~\cite{shen2020clt} defined and characterized a few more classes of CLT-groups using some normality conditions. Recently, T\u{a}rn\u{a}uceanu~\cite{tuarnuauceanu2024clt} proved that for a composite number $d$, there always exists a solvable group having no subgroup of order $d$. 
\par In this paper, we define two more classes of CLT-groups. Their definitions are as follows. A finite group $G$ is said to be a {\em Cyclic CLT (CCLT) or Abelian CLT (ACLT) group}, if $G$ contains a cyclic or abelian subgroup for every proper divisor $d$ of the order of $G$, respectively. A natural number $n$ is said to be a {\em CCLT (ACLT) number} if every group of order $n$ is a CCLT (ACLT) group. In this article, we characterize all CCLT and ACLT numbers. Also, we prove that the classes of CCLT and ACLT groups are properly contained in the class of supersolvable groups. The following theorem is the main result of this paper.
 \begin{theorem}\label{thm:ACLT}
A natural number $n$ is an ACLT number if and only if one of the following conditions holds
\begin{enumerate}
    \item $n$ is an abelian number.
    \item $n=pq$, where $p<q$ and $p|q-1$.
    \item $n=p^m$, where $p$ is prime and $m\in \{0,1,2,3,4\}$.
    \item $n=p^2q$, where $p|q-1,\ p^2\nmid q-1$ and $p$ and $q$ are distinct primes except $n=12$.
    \end{enumerate}
\end{theorem}
By definition, every CCLT group is ACLT, but the converse is not true, as a dihedral group $D_6$ of order $12$ is ACLT but not CCLT. So, the class of CCLT groups is properly contained in the class of ACLT groups. Therefore, the following Theorem, which is the characterization of CCLT numbers, is a special case of the above Theorem.
\begin{theorem}\label{thm:CCLT}
A natural number $n$ is a CCLT number if and only if either $n$ is a cyclic number that is $(n, \varphi(n))=1$ or $n=pq$, where $p$ and $q$ are prime numbers, need not be distinct.
\end{theorem}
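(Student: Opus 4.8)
The plan is to first reduce the CCLT condition to a statement about element orders: since a group $G$ has a cyclic subgroup of order $d$ if and only if it has an element of order $d$, a group $G$ of order $n$ is a CCLT group precisely when it contains an element of order $d$ for every proper divisor $d \mid n$. I will also use the elementary fact that the $p$-part of the order of any $g \in G$ is bounded by the exponent of a Sylow $p$-subgroup (the $p$-component of $g$ lies in some Sylow $p$-subgroup); in particular, if a Sylow $p$-subgroup of $G$ is non-cyclic, then no element of $G$ has order divisible by $p^{v_p(n)}$.

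For sufficiency, if $n$ is a cyclic number then $C_n$ is the only group of order $n$ and is trivially CCLT. If $n = pq$ with $p, q$ prime (allowing $p = q$), then the proper divisors of $n$ lie among $1, p, q$, and by Cauchy's theorem every group of order $n$ has elements of orders $p$ and $q$; hence every such group is CCLT and $n$ is a CCLT number.

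For necessity I argue the contrapositive: assuming $n$ is neither a cyclic number nor of the form $pq$, I exhibit a group of order $n$ omitting some proper divisor order. Such an $n$ is non-cyclic with at least three prime factors counted with multiplicity, so it splits into two cases. If $n$ is not squarefree, choose a prime $p$ with $p^2 \mid n$ and write $n = p^a m$ with $p \nmid m$ and $a \ge 2$; the abelian group $G = (C_p)^a \times C_m$ has elementary abelian Sylow $p$-subgroup, so no element has order divisible by $p^2$, while $p^2$ is a proper divisor of $n$ (having at least three prime factors guarantees $p^2 < n$). If instead $n$ is squarefree but non-cyclic, then by the $\gcd(n, \varphi(n)) = 1$ characterization of cyclic numbers there are distinct primes $p, q \mid n$ with $p \mid q - 1$, and squarefreeness together with the third prime factor leaves $k := n/(pq) > 1$ coprime to $pq$; I take $G = (C_q \rtimes C_p) \times C_k$, where $C_q \rtimes C_p$ is the non-abelian group of order $pq$, whose element orders are $1, p, q$. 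Since the $C_k$ factor contributes no factor of $p$ or $q$, no element of $G$ has order divisible by $pq$, yet $pq$ is a proper divisor of $n$.

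The main obstacle is the necessity direction, and within it the verification that the proposed witnesses genuinely omit an element order: this rests on the bound relating element orders to Sylow exponents and, in the squarefree case, on the fact that the non-abelian group of order $pq$ has no element of order $pq$ together with the coprimality of $k$ to $pq$. The reformulation in terms of element orders is what makes both directions routine once the correct witnesses are identified, while the structural split of the non-cyclic, non-$pq$ numbers into the non-squarefree and squarefree cases, supplied by the characterization of cyclic numbers, organizes the argument.
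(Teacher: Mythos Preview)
Your proof is correct and follows essentially the same strategy as the paper's: argue sufficiency directly, and for necessity produce explicit witness groups via the contrapositive, splitting into the non-squarefree and the squarefree non-cyclic cases. The only cosmetic differences are that the paper separates the prime-power case $n=p^k$ ($k\ge 3$) from the general non-squarefree case and uses the rank-two group $C_p\times C_{p^{a-1}}$ (respectively $C_p\times C_p\times C_{p^{k-2}}$) in place of your elementary abelian $(C_p)^a$; your single non-squarefree case with the uniform witness $(C_p)^a\times C_m$ is a mild streamlining of the same idea.
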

\par A new approach to study group-theoretic problems is to study them statistically and get probabilistic results, which help to understand
the asymptotic behavior. In this context, the probability of a random subgroup of $G$ to be cyclic is introduced by T\u{a}rn\u{a}uceanu and T\'{o}th~\cite{CyclicityDegree}, in $2015$ known as the {\em cyclicity degree} of $G$. Recently, T\u{a}rn\u{a}uceanu~\cite{tuarnuauceanu2024clt} introduced and studied the probability that a random subgroup of a finite group $G$ is CLT, referring to it as the {\em CLT-degree} of $G$. In this paper, we similarly define and investigate the probability that a random subgroup of a finite non-cyclic group $G$ is CCLT, which we call the {\em CCLT-degree} of $G$.\\
The organization of the article is as follows. Some notations and definitions are set in Section~\ref{sec-notation}. In Section~\ref{sec:CCLT}, a proof of Theorem~\ref{thm:CCLT} is given. We also characterize CCLT groups except for non-abelian $p$-groups in Proposition~\ref{pro:CCLTorder}. Additionally, a few more properties and examples of CCLT groups are given in this section. Section~\ref{sec:ACLT}, deals with a proof of Theorem~\ref{thm:ACLT}, various properties of ACLT groups, and some of their concrete examples for a better understanding of their structure. In section~\ref{CCLT-degree} we define and study the function {\em CCLT-degree} which measures the probability that a random subgroup to be CCLT. We conclude the article in Section~\ref{sec:conclusion}  by giving some future directions.
\section{Notation and Preliminaries}\label{sec-notation}
Throughout this paper, all groups are finite. The notations $C_n$, $D_n$, and $Dic_n$ denote the cyclic group of order $n$, dihedral group of order $2n$ and dicyclic group of order $4n$, respectively. The Euler totient function is denoted by $\varphi$. The functions $\tau(n)$ and $\omega(n)$ denote the number of divisors and the number of distinct prime divisors of $n$, respectively. The set $W(n)$ denotes the collection of prime divisors of $n$. Here, $p,q$, and $r$ are distinct prime numbers, and the number of Sylow $p$-subgroups of a group $G$ is denoted as $n_p(G)$. If $G$ is a group and $g\in G$, then $o(g)$ denotes the order of $g$ in $G$. If $G$ is a group and $H$ is a subgroup of $G$, then $N_G(H)$ and $C_G(H)$ denote the normalizer and centralizer of $H$ in $G$, respectively. A natural number $n$ is said to be an {\em cyclic (abelian) number} if every group of order $n$ is a cyclic (abelian) group. A group $G$ is known as {\em metacyclic (metabelian)} if $G$ contains a cyclic (abelian) normal subgroup $H$ such that $G/H$ is cyclic (abelian). A non-cyclic group is said to be {\em minimal non-cyclic} if all its proper subgroups are cyclic, and a non-abelian group is said to be {\em minimal non-abelian} if all its proper subgroups are abelian. These groups are completely characterized in \cite[Proposition~2.8]{jafari2017number} and \cite{miller1906number, redei1947schiefe}.

\section{CCLT Groups}\label{sec:CCLT}
\par We begin this section with the proof of the Theorem~\ref{thm:CCLT}. After this, we characterize CCLT groups except for non-abelian $p$-groups and discuss some properties related to the number of subgroups of CCLT groups. Also, we show that CCLT groups are closed with respect to subgroups and quotients, and give the necessary and sufficient conditions for the direct product of CCLT groups to be CCLT.\\
Here are the following observations about CCLT numbers.
By using the definition of a CCLT group, we can say that every cyclic group is a CCLT group thus, every cyclic number is a CCLT number. A natural number $n$ is a cyclic number if and only if $(n, \varphi(n))=1$ by \cite{pakianathan2000nilpotent}.\\ 
\textbf{Proof of Theorem~\ref{thm:CCLT}:} If $n$ is a cyclic number, $n=p^2$ or $n=pq$, where $p$ and $q$ are distinct prime numbers, then it is easy to see that $n$ is a CCLT number.
 We prove the converse by contradiction, that is, if $n$ is neither a cyclic number nor of the form $p^2$ or $pq,$ then there exists a non-CCLT group of that order.
 \par \noindent Let $n=p^k,$ where $k\ge 3$. Then the group $G=C_p\times C_p\times C_{p^{k-2}},$  does not contain a cyclic subgroup of order $p^{k-1}$. Suppose $n$ is a square free number of the form $n=p_1 p_2\cdots p_k,$ where $k\ge 3$, which is not a cyclic number, then there exists $p_i,p_j\in\{p_1,p_2,\ldots,p_k\}$ such that $p_i|p_{j}-1$. If we consider the group $G=H\times C_{\frac{n}{p_ip_j}},$ where $H$ is a non-abelian group of order $p_ip_j$, then $G$ does not contain a cyclic subgroup of order $p_ip_j$. Finally, if $n$ is of the form $p_1^{a_1}p_2^{a_2}\cdots p_k^{a_k}$, where $a_1\cdot a_2\cdots a_k\ge 2$, then $G=\left(C_{p_j}\times C_{p_j^{a_{j-1}}}\right)\times C_{ \frac{n}{p_j^{a_j}} }$, where $a_j\ge 2$ does not contain a cyclic subgroup of order $p_j^{a_j}$. Hence, the result follows.\\
The following are a few nontrivial examples of CCLT groups. 

\begin{ex}\label{ex:CCLT:1}
 Dihedral group $D_n$ and dicyclic group $Dic_n$ are CCLT if and only if either $n$ is a prime number or $n=2^k$, where $k\in\N$.
 \end{ex}
 \begin{ex}\label{ex:CCLT:2}
The group $G=C_{p^{n-1}}\rtimes C_p$ is a CCLT group, where $n\in \N$.
\end{ex}
\begin{ex}\label{ex:CCLT:3}
 The semi-dihedral group $S_{2^n}$ and generalized quaternion group $Q_{2^n}$ of order $2^n$ are CCLT groups these groups can be found in \cite{gn}.
 \end{ex}
  
 \begin{pro}\label{pro:metacylcic}
  Every CCLT group is metacyclic and hence supersolvable. 
 \end{pro}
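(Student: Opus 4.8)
The plan is to make precise the two-step argument sketched in the paragraph preceding the statement: first show that a CCLT group $G$ is metacyclic, and then that metacyclic groups are supersolvable.

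For the metacyclic part I would argue as follows. Let $G$ be a CCLT group of order $n$ and let $p$ be the smallest prime dividing $n$. Since $\frac{n}{p}$ is a divisor of $n$ strictly less than $n$, the CCLT hypothesis yields a cyclic subgroup $H\le G$ with $|H|=\frac{n}{p}$, so that $[G:H]=p$. The crucial point is the classical fact that a subgroup whose index equals the smallest prime dividing $|G|$ is automatically normal. I would prove this by letting $G$ act on the left cosets of $H$, producing a homomorphism $G\to S_p$ with kernel $K\le H$; comparing $[G:K]=p\,[H:K]$ with the divisibility $[G:K]\mid p!$ forces $[H:K]$ to divide $(p-1)!$, and since every prime factor of $|G|$ is at least $p$ while every prime factor of $(p-1)!$ is smaller than $p$, we conclude $[H:K]=1$, i.e. $H=K\trianglelefteq G$. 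As $G/H$ then has prime order $p$, it is cyclic, so $G$ is metacyclic with cyclic normal subgroup $H$.

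For supersolvability I would refine the two-term series $1\le H\le G$ into a normal series of $G$ with cyclic factors. Because $H$ is cyclic it has a unique, hence characteristic, subgroup of each order dividing $|H|$, which gives a chain $1=H_0\le H_1\le\cdots\le H_m=H$ with each $H_i$ characteristic in $H$ and each factor cyclic; since $H\trianglelefteq G$, each $H_i$ is then normal in $G$. Dually, $G/H$ is cyclic, so all of its subgroups are normal; pulling back a suitable chain in $G/H$ produces $H=K_0\le K_1\le\cdots\le K_r=G$ with every $K_i\trianglelefteq G$ and every factor $K_i/K_{i-1}$ cyclic. Concatenating the two chains exhibits a series of $G$, each term normal in $G$, with all factors cyclic, which is precisely supersolvability.

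The index-equals-smallest-prime lemma and the bookkeeping of the refined series are routine. The one point I expect to require genuine care is that supersolvability is \emph{not} preserved under arbitrary extensions, so it does not suffice merely to note that both $H$ and $G/H$ are (super)solvable; one must build a single series all of whose terms are normal in the \emph{whole} group $G$. This is exactly why the argument leans on the characteristic subgroups of the cyclic group $H$ and on the normality in $G$ of the preimages of subgroups of the cyclic quotient $G/H$.
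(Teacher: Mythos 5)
Your proposal is correct and follows essentially the same route as the paper: the paper's argument (given in the paragraph preceding the statement) is exactly to take a cyclic subgroup $H$ of index equal to the smallest prime $p$ dividing $|G|$, observe that $H$ is normal and $G/H$ is cyclic, and conclude metacyclicity. You merely fill in two standard facts the paper leaves implicit --- the coset-action proof that a subgroup of smallest-prime index is normal, and the refinement argument showing metacyclic groups are supersolvable --- both of which are carried out correctly.
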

 \begin{proof}
     Let $G$ be a CCLT group of order $n$, and $p$ be the smallest prime divisor of $n$. Then by definition, $G$ contains a cyclic subgroup $H$ of order ${n}/{p}$. Since the subgroup of the smallest prime index of a group is normal, then $H$ is normal in $G$. Also, $G/H$ is cyclic. Therefore, $G$ is metacyclic. By \cite[Theorem~1.2]{bray1982between}, every metacyclic group is supersolvable. Thus, every CCLT group is also supersolvable.
 \end{proof}
 Here, an interesting observation is that most of the non-cyclic CCLT groups are minimal non-cyclic. It is not true in general as the group $C_p\times C_{p^{n-1}}$ is a CCLT group of order $p^n$, its subgroup $C_p\times C_{p^{n-2}}$ of order $p^{n-1}$ is not cyclic. Next, we prove that a CCLT group which is neither abelian nor a non-abelian $p$-group is minimal non-cyclic. To prove this, we first prove the following Lemma and Theorem~\ref{thm:Zgroup}.
 \begin{lemma}\label{thm:sylow}
 Every Sylow $p$-subgroup of a CCLT group that is not a $p$-group is cyclic.
 \end{lemma}
\begin{proof}
The proof is immediate by using the Sylow theorem and the definition of the CCLT group.
\end{proof}
In the literature, the groups satisfying the hypothesis of Lemma~\ref{thm:sylow} are called {\em $Z$-groups}. For more examples of $Z$-groups, we refer \cite{gn}. Also every $Z$-group is metacyclic by \cite[Theorem~9.4.3]{hall2018theory}. Dihedral group $D_{15}$ is an example of $Z$-group, which is not a CCLT group. Therefore, the class of CCLT groups except $p$-groups is properly contained in the class of $Z$-groups. Hence, if we exclude groups of prime power order, then the class of CCLT groups has the following relationship with the other known classes of groups.
$$\mbox{Cyclic groups} \subset \mbox{CCLT groups} \subset \mbox{$Z$-groups}\subset \mbox{Metacyclic groups}\subset \mbox{Supersolvable groups}.$$
\begin{theorem}\label{thm:Zgroup}
 Any two subgroups of the same order in a $Z$-group are isomorphic. Moreover, every proper subgroup of a CCLT group that is not a $p$-group is cyclic.
 \end{theorem}
\begin{proof}
 Let $G$ be a $Z$-group of order $mn$. Then by using the Theorem~$11$ of Chapter~$5$ \cite{hans}, $G$ has the following presentation: 
 \begin{equation}\label{eq:ab}
  G=\langle a,b|a^m=b^n=e, bab^{-1}=a^r, \gcd((r-1)n,m)=1,r^n\equiv 1 \;\pmod m\rangle.
 \end{equation}
Let $H$ and $K$ be the subgroups of $G$ of the same order $d$. If $d|m$ or $d|n,$ then we are done as both $H$ and $K$ are cyclic subgroups of order $d$. Suppose $d=ts$, where $t, s>1$, $t|m$ and $s|n$. Then $$H=\langle a^{k_1},b^{k_2}\rangle,K=\langle a^{v_1},b^{v_2}\rangle,\;\;\mbox{where}\;\;o(a^{k_1})=o(a^{v_1})=t, o(b^{k_2})=o(b^{v_2})=s.$$
Let us define a map $\psi:H\to K$ as $\psi(a^{k_1})=a^{v_1} $ and $\psi(b^{k_2})=b^{v_2}$. Then, by using relationships given in Equation~(\ref{eq:ab}), it is easy to see that $\psi$ is an isomorphism. Hence, the result holds.
\end{proof}
\begin{cor}\label{cor:cyclic}
Every CCLT group that is neither cyclic nor a $p$-group is minimal non-cyclic.
\end{cor}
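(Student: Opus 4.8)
The plan is to deduce this almost immediately from the two structural theorems already established, using the CCLT hypothesis only to supply cyclic subgroups of the right orders. Since $G$ is neither cyclic nor a $p$-group, its order $n=p_1^{a_1}\cdots p_k^{a_k}$ has $k\ge 2$ distinct prime factors, so Theorem~\ref{thm:sylow} applies and tells us that every Sylow subgroup of $G$ is cyclic; that is, $G$ is a $Z$-group. This is the one place where the hypothesis ``not a $p$-group'' is essential, since for a $p$-group the Sylow subgroup is the whole group and need not be cyclic (for instance $C_p\times C_{p^{n-1}}$), so the $Z$-group machinery would be unavailable.

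Next I would take an arbitrary proper subgroup $H\le G$ and aim to show it is cyclic. Writing $d=|H|$, Lagrange's theorem gives $d\mid n$, and since $H$ is proper we have $d<n$. Because $G$ is CCLT and $d<n$ is a divisor of $n$, the definition guarantees a cyclic subgroup $C\le G$ of order $d$.

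Now I would invoke Theorem~\ref{thm:Zgroup}, which states that in a $Z$-group any two subgroups of the same order are isomorphic. Applying this to $H$ and $C$, both of order $d$, yields $H\cong C$, and since $C$ is cyclic so is $H$. As $H$ was an arbitrary proper subgroup and $G$ itself is not cyclic by hypothesis, $G$ is by definition minimal non-cyclic.

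The argument is short and I do not anticipate a genuine obstacle; the only points to watch are, first, that the CCLT hypothesis is correctly invoked only for proper divisors $d<n$, which is exactly the regime in which the order of a proper subgroup falls, and second, that the exclusion of $p$-groups is precisely what licenses the passage to the $Z$-group structure underlying Theorems~\ref{thm:sylow} and~\ref{thm:Zgroup}. Everything else is a direct chaining of these two results with the definition of CCLT.
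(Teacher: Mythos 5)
Your proof is correct and is exactly the argument the paper intends: the corollary is stated immediately after Theorem~\ref{thm:sylow} and Theorem~\ref{thm:Zgroup} precisely so that it follows by chaining them with the CCLT definition, as you do. Your explicit attention to why the $p$-group exclusion is needed (e.g.\ $C_p\times C_{p^{n-1}}$) is a nice touch but does not change the route.
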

The following is the classification for CCLT groups except for non-abelian $p$-groups.
\begin{pro}\label{pro:CCLTorder} 
If $G$ is a CCLT group of order $n$ which is not a non-abelian $p$-group, then
 $$G\cong \begin{cases}
               C_n & \mbox{if $G$ is cyclic,}\\
               C_p\times C_{p^{k-1}} & \mbox{if $G$ is abelian but not cyclic,}\\
             \langle a, b| a^q=b^{p^r}=1, b^{-1}ab=a^s \rangle & \mbox{if $G$ is non-abelian,
        and $n=p^rq$, where $p|(q-1)$}\\
             & \mbox{and $r, s\in \N, q\nmid (s-1), q|(s^p-1).$}
              \end{cases}$$ 

 \end{pro}
\begin{proof}
\par \noindent If $G$ is cyclic, then $G\cong C_n$. If $G$ is an abelian group that is not cyclic, then by the Fundamental theorem of finitely generated abelian groups $G\cong C_p\times C_{p^{k-1}}$, where $n=p^k$. By Theorem~\ref{thm:Zgroup} and \cite[Proposition 2.8]{jafari2017number}, we can deduce that the order of every CCLT group which is neither cyclic nor a $p$-group is $p^rq$, where $p, q$ are primes and $p|q-1.$ Further, there exists a unique non-cyclic CCLT group of that order having the following presentation $$\langle a, b| a^q=b^{p^r}=1, b^{-1}ab=a^s \rangle,$$ where $r, s\in \N, q\nmid (s-1), q|(s^p-1)$.
This completes the proof.
\end{proof}     
The next result is related to the number of non-isomorphic CCLT groups except for non-abelian $p$-groups.
\begin{pro}\label{number:CCLT}
Let $G$ be a CCLT group of order $n$, which is not a non-abelian $p$-group, and let $G_{CCLT}(n)$ denotes the number of non-isomorphic CCLT groups of order $n$. Then
\begin{enumerate}
    \item  If $n$ is not a prime power, then 
$$G_{CCLT}(n)=\begin{cases}
               2 & \mbox{if $n=p^rq$ and $p|(q-1)$,}\\
               1 & \mbox{otherwise.}
              \end{cases}$$ 
\item If $n=p^k$, then $G_{CCLT}(n)=G(n)$ for $k=1$ or $2$, where $G(n)$ denotes the number of non-isomorphic groups of order $n$. Also
$G_{CCLT}(2^3)=4$ and

$$G_{CCLT}(p^k)\geq\begin{cases}
               6 & \mbox{if $p=2$ and $k>3$,}\\
               3 & \mbox{if $p\not=2$ and $k\geq 3$.}
              \end{cases}$$    

\end{enumerate}
\end{pro}
\begin{proof} 
Since every cyclic group is CCLT, then for every natural number $n$, there exists a CCLT group of order $n$.
    If $G$ is not a $p$-group, then by Proposition~\ref{pro:CCLTorder}, one can check that for $n=p^rq$, where $p|(q-1)$ we have $G_{CCLT}(n)=2$ otherwise it is one.
    \par \noindent If $G$ is a $p$-group, then for $n=p$ and $p^2$ by Theorem~\ref{thm:CCLT} we have $G_{CCLT}(n)=G(n)$. Also, it is easy to see that $G_{CCLT}(2^3)=4$. Moreover, for $p=2$ and $k> 3$ the semi direct product $C_{2^{k-1}}\rtimes C_{2}$ gives four CCLT groups, and $Dic_{2^{k-2}}$ is also a CCLT group of order $2^k$ by Example~\ref{ex:CCLT:1}. Therefore for $k>3, G_{CCLT}(2^k)\geq 6$.
    \par \noindent If $p$ is an odd prime then $C_{p^{k-1}}\rtimes C_{p}$ gives two CCLT groups of order $p^k$, where $k\geq 3$. Thus $G_{CCLT}(p^k)\geq 3$. Hence, the result holds.
\end{proof}
\begin{theorem}\label{thm:nosubgroup}
 Let $G$ be a CCLT group of order $n$, which is not a non-abelian $p$-group, and let $s(G)$ be the number of subgroups of $G$. Then
 $$s(G)=\begin{cases}
               \tau(n) & \mbox{if $G$ is cyclic,}\\
                2+(p+1)(k-1) & \mbox{if $G$ is abelian and $n=p^k$,}\\
               2r+q+1 & \mbox{if $n=p^rq, p\not=q$ and $p|(q-1).$}
              \end{cases}$$
\end{theorem}
\begin{proof}
\par \noindent If $G$ is cyclic then it is a well known fact that $s(G)=\tau (n)$ (see~\cite{dummit1991abstract}).
However, when $G$ is non-cyclic abelian group, then $G\cong C_p\times C_p^{k-1}$ by Proposition~\ref{pro:CCLTorder}. Then by using \cite[Theorem~A]{aivazidis2020finite} it is immediate that $s(G)=2+(p+1)(k-1)$. 
\par \noindent On the other hand if $G$ is non-abelian, then Corollary~\ref{cor:cyclic} and Proposition~\ref{pro:CCLTorder} yields that $|G|=p^rq$ and all proper subgroups of $G$ are cyclic. By using a result given in \cite[Page No.~33]{taunt_1949} it can be concluded that $G$ has a unique  cyclic subgroup $H$ of order $p^{r-1}q.$ Since any subgroup of $G$ of order $p^{r-1}q$ is maximal, therefore all other proper subgroups of $G$ are contained in $H$ except the subgroups of order $p^r.$ Now it is easy to see that the number of subgroups of $H$ is $\tau (p^{r-1}q)=2r.$ Also, number of elements of order $p^r$ in $G$ is $p^rq-p^{r-1}q,$ which implies that number of subgroups of order $p^r$ in $G$ is $q.$ Hence $S(G)=2r+q+1$.
\end{proof}
\begin{cor} Let $G$ be a CCLT group which is not a non-abelian $p$-group and let $c(G)$ be the number of cyclic subgroups of $G$. Then
 $$c(G)=\begin{cases}
               \tau(n) & \mbox{if $G$ is cyclic,}\\
                (k-1)p+2 & \mbox{if $G$ is abelian and $n=p^k$,}\\
               2r+q & \mbox{if $n=p^rq, p\not=q$ and $p|(q-1).$}
              \end{cases}$$
 \end{cor} 
 \begin{proof}
 The proof is immediate by using \cite[Theorem~1]{c(G)for_abelian_groups} and Theorem~\ref{thm:nosubgroup}.
 \end{proof}
 Next, we prove that CCLT groups are closed under passage to subgroups and quotients. Also, we discuss the necessary and sufficient conditions for the direct product of CCLT groups to be CCLT, as it is not true in general.

\begin{theorem}\label{thm:CCLTsubgroup}
Every subgroup of a CCLT group is CCLT.  
\end{theorem}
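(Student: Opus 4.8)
The plan is to prove the statement by splitting into cases according to the structure of the CCLT group $G$, leaning on the results already assembled in this section. Let $K\le G$ be a subgroup. The extreme cases $K=\{e\}$ (vacuously CCLT) and $K=G$ (CCLT by hypothesis) require nothing, so I may assume $K$ is a proper, nontrivial subgroup. I then distinguish three possibilities for $G$ itself: $G$ is cyclic; $G$ is non-cyclic but not a $p$-group; or $G$ is a non-cyclic $p$-group.

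The first two cases are immediate from what precedes. If $G$ is cyclic, then every subgroup of $G$ is cyclic, hence CCLT. If $G$ is non-cyclic and not a $p$-group, then Corollary~\ref{cor:cyclic} tells me $G$ is minimal non-cyclic, so every proper subgroup $K$ is cyclic and therefore CCLT. Thus the only substantive work lies in the $p$-group case.

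For that case I would first record the characterisation that a $p$-group of order $p^m$ is CCLT if and only if it contains a cyclic subgroup of index $p$: a cyclic subgroup of order $p^{m-1}$ already supplies cyclic subgroups of every smaller order $p^j$, and conversely the CCLT hypothesis forces such a subgroup of order $p^{m-1}$ to exist. Since $G$ is a CCLT $p$-group, I fix a cyclic subgroup $C\le G$ with $[G:C]=p$; being of index $p$ in a $p$-group, $C$ is maximal and hence normal. Now for any $K\le G$ the product $KC$ is a subgroup containing $C$, so by the second isomorphism theorem $[K:K\cap C]=[KC:C]$ divides $[G:C]=p$. Consequently $K\cap C$, as a subgroup of the cyclic group $C$, is a cyclic subgroup of $K$ whose index in $K$ is either $1$ or $p$. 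In both situations $K$ has a cyclic subgroup of index dividing $p$, which for a $p$-group is exactly the CCLT condition, and the argument is complete.

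I expect the only genuine obstacle to be the $p$-group case, specifically pinning down that CCLT $p$-groups are precisely those with a cyclic maximal subgroup and then verifying that the index computation $[K:K\cap C]\mid p$ is legitimate, which rests on the normality of $C$. Everything outside this reduces cleanly to Corollary~\ref{cor:cyclic}, and the theorem answers in the affirmative the natural question raised earlier about subgroups of CCLT groups.
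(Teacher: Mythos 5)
Your proof is correct, and in the crucial $p$-group case it takes a genuinely different route from the paper. The paper argues by contradiction: it picks a non-CCLT subgroup $H$ of maximal order $p^k$, embeds it as a normal subgroup in a CCLT subgroup $N$ of order $p^{k+1}$ (via normalizer growth in $p$-groups), takes a cyclic $K\le N$ of order $p^k$, and uses the product formula $|HK|=|H||K|/|H\cap K|\le p^{k+1}$ to force $|H\cap K|=p^{k-1}$, so that $H$ acquires a cyclic subgroup of index $p$ --- contradiction. You instead argue directly and globally: you fix once and for all the cyclic maximal subgroup $C$ of $G$ (which exists by the CCLT hypothesis and is normal, having index $p$ in a $p$-group), and for an arbitrary $K\le G$ apply the second isomorphism theorem to get $[K:K\cap C]=[KC:C]\mid [G:C]=p$, so $K\cap C$ is a cyclic subgroup of $K$ of index $1$ or $p$. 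Both proofs pivot on the same underlying mechanism --- intersecting with a cyclic subgroup of index $p$ and bounding the index of the intersection --- but your version dispenses with the maximal-counterexample scaffolding and with the paper's under-justified step ``Also we can prove that $\al=k-1$,'' replacing both by one clean application of normality plus the second isomorphism theorem; it also yields the sharper quantitative statement that every subgroup of a CCLT $p$-group contains a cyclic subgroup of index at most $p$. The remaining cases (cyclic $G$, and non-cyclic $G$ that is not a $p$-group, handled via Corollary~\ref{cor:cyclic}) coincide with the paper's treatment.
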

\begin{proof}
Let $G$ be a CCLT group. Then, there are the following two cases.
\begin{enumerate}
\item Let $G$ be a $p$-group of order $p^n$. Then, one can check the result holds for $n=1$ and $2$ as $p$ and $p^2$ are CCLT numbers by Theorem~\ref{thm:CCLT}. Thus, from now onwards, assume that $n\ge 4$. We prove the result by the method of contradiction. Let $k$ be the largest such that $G$ has a non-CCLT subgroup $H$ of order $p^k$. Then $H$ is normal in a CCLT subgroup $N$ of $G$ of order $p^{k+1}$. Since $N$ is CCLT, so it has a cyclic subgroup of order $p^k$, let us call it $K$. Then $HK$ is a subgroup of $N$ and 
 $$|HK|=\frac{|H||K|}{|H\cap K|}=p^{2k-\al}\le p^{k+1},$$ where $\al=|H\cap K|$, which implies that $\al = k-1$ and $H$ contains a cyclic subgroup of order $p^{k-1}$. Therefore, $H$ is the CCLT group, which is a contradiction. Hence, the result holds.
 \item If G is not a $p$-group. In this case by Corollary~\ref{cor:cyclic}, we can say that every proper subgroup of $G$ is cyclic.\\ 
 Therefore, CCLT groups are closed under passage to subgroups.
 \end{enumerate}
\end{proof}
\begin{theorem}
Quotient group of a CCLT group is CCLT.
\end{theorem}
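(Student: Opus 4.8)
The plan is to fix a normal subgroup $N\trianglelefteq G$ and verify the CCLT condition for $G/N$ directly: for every proper divisor $e$ of $m:=|G/N|$, exhibit a cyclic subgroup of order $e$. If $G$ is cyclic this is immediate, since every quotient of a cyclic group is cyclic and hence CCLT, so I would assume $G$ is non-cyclic. Let $p$ be the smallest prime dividing $|G|$. By the construction preceding Proposition~\ref{pro:metacylcic}, the CCLT group $G$ contains a cyclic subgroup $H$ of order $|G|/p$, and because $[G:H]=p$ is the least prime divisor, $H$ is normal in $G$.

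First I would analyze the product $HN$. Since $H\trianglelefteq G$ and $[G:H]=p$ is prime, the correspondence with subgroups of $G/H\cong C_p$ forces $HN\in\{H,G\}$. If $HN=G$, then the second isomorphism theorem gives $G/N\cong H/(H\cap N)$, a quotient of the cyclic group $H$; thus $G/N$ is cyclic and in particular CCLT. So the substantive case is $HN=H$, i.e. $N\subseteq H$. Here $K:=H/N$ is a cyclic subgroup of $G/N$ of index $[G:H]=p$, so $|K|=m/p$, and being cyclic it supplies cyclic subgroups of every order dividing $m/p$.

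The remaining step is to cover those proper divisors $e\mid m$ that do \emph{not} divide $m/p$. Writing $m=p^s t$ with $p\nmid t$, these are exactly the $e=p^s u$ with $u\mid t$ and $u<t$. At this point I would invoke the classification of non-cyclic CCLT groups. If $G$ is a $p$-group, then $m$ is a power of $p$, so $t=1$ and there is no such proper $e$; hence $K$ alone witnesses that $G/N$ is CCLT. Otherwise, by Corollary~\ref{cor:cyclic} and Remark~\ref{rem:CCLTorder}, $|G|=p^{r}q$ with $p\mid q-1$ and $p<q$, so $m=p^{i}$ or $m=p^{i}q$; the prime-power case is handled as before, while for $m=p^{i}q$ the only proper divisor carrying the full $p$-part is $e=p^{i}$. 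A cyclic subgroup of this order is provided by a Sylow $p$-subgroup of $G/N$, which is cyclic, being the image of a cyclic Sylow $p$-subgroup of $G$ (cf.\ Theorem~\ref{thm:sylow}). Combining $K$ with this Sylow $p$-subgroup covers all proper divisors of $m$, so $G/N$ is CCLT.

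The main obstacle is this third step: the index-$p$ cyclic subgroup $K$ does not by itself finish the argument once $G/N$ has mixed order, because it misses precisely the divisors with maximal $p$-part. The decisive points are that the classification severely restricts $m$ to be either a prime power or of the shape $p^{i}q$, and that the single troublesome divisor $p^{i}$ is always realized by the automatically cyclic Sylow $p$-subgroup of the quotient. I would also record the easy edge cases, namely that $m=q$ cannot occur (there is no normal subgroup of order $p^{r}$ in the non-abelian case, as its Sylow $p$-subgroup is not normal) and that $N=\{e\}$ and $G/N$ of order $p$ are trivially admissible.
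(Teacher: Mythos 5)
Your proof is correct, but it takes a partially different route from the paper's. The paper argues by cases: for $p$-groups it runs an element-order computation (take $g$ of order $p^{n-1}$, show that $o(gN)$ is at least $p^{n-m-1}$, so the image of $\langle g\rangle$ has index at most $p$ in $G/N$), and for non-$p$-groups it simply combines Corollary~\ref{cor:cyclic} (all proper subgroups of $G$ are cyclic) with the correspondence theorem to conclude that every proper subgroup of $G/N$ is cyclic, hence $G/N$ is CCLT. You instead organize everything around the normal cyclic subgroup $H$ of index $p$ and the dichotomy $HN\in\{H,G\}$: your handling of the case $N\subseteq H$ via the second isomorphism theorem is the subgroup-level version of the paper's element computation and disposes of $p$-groups identically; but in the mixed-order case you replace the paper's correspondence argument with the classification $|G|=p^rq$ from Remark~\ref{rem:CCLTorder} together with the observation that a Sylow $p$-subgroup of $G/N$ is the (cyclic) image of a cyclic Sylow $p$-subgroup of $G$ (Theorem~\ref{thm:sylow}). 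What your version buys is explicitness: the paper's step ``every proper subgroup of $G/N$ is cyclic, hence $G/N$ is CCLT'' tacitly uses that $G/N$ actually has subgroups of every proper divisor order (e.g.\ because quotients of supersolvable groups are supersolvable, hence CLT), whereas you exhibit the required cyclic subgroups of each order directly. What the paper's route buys is brevity in the non-$p$-group case --- two lines against your divisor bookkeeping --- and no reliance on the Sylow-image fact. Your closing remarks about $m=q$ and the trivial cases are unnecessary, since the dichotomy $HN\in\{H,G\}$ already covers them, but they are harmless.
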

\begin{proof}
Let $G$ be a CCLT group and $N$ be a normal subgroup of $G$. Then we will prove the result casewise.
\begin{enumerate}
 \item Let $G$ be a $p$-group of order $p^n$. To prove this result, it is sufficient to consider the case $|N|=p^m$, where $m\leq n-2$. Since $G$ is a CCLT group, then it contains an element of order $p^{n-1}$ say $g$. Let $|gN|=p^k$, where $k\leq {n-m}$. Then $o({g^p}^k)\leq p^m.$ Consider, $$o({g^p}^k)=\frac {o(g)} {(o(g), p^k)}=\frac {p^{n-1}}{(p^{n-1}, p^k)}=\frac {p^{n-1}}{p^k}=p^{n-k-1}\leq p^m.$$ As a consequence  $k\geq n-m-1.$ This shows that $k=n-m-1$ or $k=n-m$. Therefore, $G/N$ is CCLT.
\item Let $G$ be not a $p$-group. 
Then by Corollary~\ref{cor:cyclic} all proper subgroups of $G$ are cyclic. If $N$ is a normal subgroup of $G$ then by Correspondence theorem, every proper subgroup of $G/N$ is cyclic. Hence, $G/N$ is CCLT.
\end{enumerate}
\end{proof}
 \begin{theorem}\label{thm:DPCCLT}
  Let $H$ and $K$ be the groups of order $m$ and $n$, respectively.
  If $H\times K$ is a CCLT group, then both $H$ and $K$ are cyclic. Conversely, if $H$ and $K$ are cyclic groups, then $H\times K$ is a CCLT group if one of the following conditions holds:
  \begin{enumerate}
   \item \label{thm:DPCCLT:1} If $\gcd(m,n)=1.$
   \item \label{thm:DPCCLT:2} If $\gcd(m,n)\ne 1,$ then $H\times K\cong C_p\times C_{p^k},$ where $p$ is a prime number and $k\in \N.$
  \end{enumerate}
 \end{theorem}
\begin{proof}
 Suppose $H\times K$ is a CCLT group, and $H$ is not a cyclic group. If $\gcd(m, n)=1$ then it is easy to see that $H\times K$ has no cyclic subgroup of order $m$. If $\gcd(m,n)\ne 1$ then $p|\gcd(m, n)$, where $p$ is a prime number. Also $m=p^rx$ and $n=p^sy$, where $\gcd(p, x)=\gcd(p, y)=1.$ Then it is easy to verify that $H\times K$ has no cyclic subgroup of order $p^{\max\{r,s\}+1}$. This is a contraction.\\
 Conversely, suppose that $H$ and $K$ are cyclic groups. If $\gcd(m,n)=1,$ then $H\times K$ is cyclic, and hence it is a CCLT group. If $\gcd(m,n)\ne 1$ then $p|\gcd(m, n)$, where $p$ is a prime number. Then $m=p^rx$ and $n=p^sy,$ where $\gcd(p,x)=\gcd(p,y)=1.$ Now our aim is to show that $xy=1$ and either $r=1$ or $s=1$. Suppose $xy\ne 1$ or both $r>1$ and $s>1$, then $H\times K$ has no cyclic subgroup of order $p^{\max\{r,s\}+1}$. This contradicts the fact that $H\times K$ is a CCLT group. Hence the result follows.
\end{proof}

\section{ACLT Groups}\label{sec:ACLT}
In this section, we first discuss a proof of Theorem~\ref{thm:ACLT}, along with some examples of ACLT groups. We also prove that the order of a non-abelian ACLT group has at most two prime divisors. Moreover, we prove that the class of ACLT groups is properly contained in the class of supersolvable groups. At last, we show that ACLT groups are closed with respect to subgroups and the necessary and sufficient conditions for the direct product of ACLT groups to be ACLT.\\
We start with the following results about abelian numbers. A natural number $n$ is an abelian number if and only if it is cube-free and there is no prime power $p^k|n$ with $k\geq 1$, such that $p^k \equiv 1 \mod{q}$ for a prime $q|n$ by \cite{pakianathan2000nilpotent}. 
\begin{lemma}\label{ACLT,orderP^2q}
    A group $G$ of order $p^2q$ is ACLT if and only if either $G$ is abelian or $p|q-1$ but $p^2\nmid q-1$.
\end{lemma}
\begin{proof}
  The idea of this proof is taken from the proofs of Propositions $3.2$ and $3.3$ of \cite{kalra2019finite}. To prove this result, it is sufficient to prove that non-abelian groups of order $p^2q$, where $p|q-1$ but $p^2\nmid q-1$, contain an element of order $pq$, and for other cases, there exists a non-abelian group having no element of order $pq$. There are the following two cases:\\
  The first case is $p<q$. If $p\nmid q-1$, then $G$ is abelian, which implies that $p|q-1$. Now, there are the following possibilities. First, assume that $p^2\nmid q-1$ except in the case $p=2$ and $q=3$. There are two non-abelian groups of this type by Page No. $76-80$ of \cite{burnside_2012}. The first such group can be presented as $$G_1=\langle a,b,c : a^q=b^p=c^p=1, bab^{-1}=a^k, ca=ac, cb=bc, ord_q(k)=p\rangle.$$ Also $G_1\cong (C_q\rtimes C_p)\times C_p$. By Sylow theorem $(C_q\rtimes C_p)$ has $q$ subgroups of order $p$ and $G_1$ has no element of order $p^2$. If $(x, y)\in G_1$ has order $p$, then either both $x$ and $y$ have order $p$ or one of them is of order $p$. Thus $G_1$ has $p^2q-pq+p-1$ elements of order $p$. Since $G_1$ has a unique Sylow $q$-subgroup, so it has $q-1$ elements of order $q$. Now we can see that $G_1$ has $(p-1)(q-1)$ elements of order $pq$. The second group can be presented as $$G_2=\langle a,b : a^q=b^{p^2}=1, bab^{-1}=a^k,ord_q(k)=p\rangle.$$ Also $G_2\cong C_q\rtimes C_{p^2}$. By Sylow theorem, $G_2$ has unique Sylow $q$-subgroup and $q$ Sylow $p$-subgroups. Thus $G_2$ contains $q-1$ elements of order $q$ and $pq(p-1)$ elements of order $p^2$. By using the fact that $bab^{-1}=a^k$ and  $ord_q(k)=p$, we have $b^pa=a^{k^p}b^p=ab^p$. This implies that every element of $G_2$ can be written as $a^mb^n$, where $1\leq m\leq q$ and $1\leq n\leq p^2$. Also $(a^mb^n)^p=1$ only if $m=q$ and $n$ is a multiple of $p$. Therefore $G_2$ has $p-1$ elements of order $p$ and remaining $(p-1)(q-1)$ elements are of order $pq$. Hence, $G_2$ is ACLT. If $p^2q=12$, then the Alternating group $A_4$ of order $12$ is not ACLT.\\
  Now suppose that $p^2|q-1$. Then by Page No. $76-80$ of \cite{burnside_2012}, there are three non-abelian groups of this order. They are $G_1$ and $G_2$ as defined above, and the third group can be presented as $$G_3=\langle a,b : a^q=b^{p^2}=1, bab^{-1}=a^k,ord_q(k)=p^2\rangle.$$ Also $G_3\cong C_q\rtimes C_{p^2}$. The group $G_3$ also has $q-1$ elements of order $q$ and $pq(p-1)$ elements of order $p^2$. 
 Every element of $G_3$ can be written in the form $a^mb^n$, where $1\leq m\leq q$ and $1\leq n\leq p^2$ same as in $G_2$. By using the relation $bab^{-1}=a^k$ we have $(a^mb^n)^p=a^{(m(1+k^n+k^{2n}+\cdots k^{(p-1)n}))}b^{np}=a^{m({\frac{k^{pn}-1}{k^{n}-1}})}b^{np}$. Then $(a^mb^n)^p=1$ if and only if $n$ is a multiple of $p$ and $m$ can take any value. Therefore $G_3$ has $(p-1)q$ elements of order $p$ and no element of order $pq$. Therefore $G_3$ is not ACLT.\\
  The second case is $p>q$. If $G$ is a group of order $p^2q$, then $G$ has a unique Sylow $p$-subgroup, and the number of Sylow $q$-subgroups is either $p$ or $p^2$. If $n_q(G)=p^2$, then $G$ has $p^2q-p^2$ elements of order $q$ and $p^2$ elements of order $1, p$ and $p^2$. Thus $G$ has no element of order $pq$. If $n_q(G)=p$, then $G$ has $p(q-1)$ elements of order $q$. Since $n_p(G)=p^2$ so $G$ has $p^2$ elements of order $1, p$ and $p^2$ so remaining $p(p-1)(q-1)$ elements of order $pq$. Consider the group $$G=\langle a,b : a^{p^2}=b^{q}=1, bab^{-1}=a^k,ord_{p^2}(k)=q\rangle.$$ Then, same as previous part, we can check that every element of $G$ is of the form $(a^mb^n)$, where $1\leq m\leq p^2$ and $1\leq n\leq q$ and $G$ has $p^2(q-1)$ elements of order $q$. This shows that $n_q(G)=p^2$ and $G$ are not ACLT. Hence, the result follows.
\end{proof}
\noindent \textbf{Proof of Theorem~\ref{thm:ACLT}:} First, we prove that if $n$ satisfies the hypothesis, then $n$ is an ACLT number. If $n$ is an abelian number or $n=pq$ or $n=p^3$, then it is easy to verify that $n$ is an ACLT number. If $n=p^4$, then in order to show that $n$ is an ACLT number, it is sufficient to prove that every non-abelian group of order $p^4$ contains an abelian subgroup of order $p^3$. By \cite{miller1906number}, a group of order $p^m$ has an abelian subgroup of order $p^k$, where $k(k+1)\ge 2m$. Therefore $n=p^4$ is an ACLT number. If $n=p^2q$, where $p|q-1, p^2\nmid q-1$ is an ACLT number except $n=12$ by Lemma~\ref{ACLT,orderP^2q}.
  \par \noindent To prove the converse part, it is shown that whenever $n$ is not satisfying the hypotheses, then there exists a non-ACLT group of order $n$. Now there are the following cases.
  \begin{enumerate}
      \item Suppose $n$ is a squarefree number of the form $n=p_1\times p_2\times p_3\times\cdots \times p_m$, where $m\ge 3$. Since $n$ is not an abelian number then there exists $p_i,p_j\in \{p_1,p_2,\ldots,p_m\}$ such that $p_i|p_j-1$. Let $H$ be a non-abelian group of order $p_i\times p_j$, then the group 
  $G=H\times C_{\frac{n}{p_ip_j}}$, does not contain an abelian subgroup of order $p_ip_j$.
 \item Let $n=p^m$, where $p$ is a prime number and  $m\ge 5$. If $p=2$ then the group $(C_2)^3\rtimes C_4= \langle a,b,c,d | a^2=b^2=c^2=d^4=1, ab=ba, ac=ca, dad^{-1}=abc, dbd^{-1}=bc=cb, cd=dc \rangle$ has no abelian subgroup of order $2^4$ by \cite{GAP4}. Thus the group $((C_2)^3\rtimes C_4)\times C_{2^{m-5}}$ has no abelian subgroup of order $p^{m-1}$. If $p$ is an odd prime, then there exists a non-abelian group of order $p^5$ having no abelian subgroup of order $p^4$ by \cite{MR1502819}, let us call this group $H$. Then the group $H\times C_{p^{m-5}}$ has no abelian subgroup of order $p^{m-1}$.
\item Suppose $n=p_1^{a_1}p_2^{a_2}\dots p_k^{a_k}$ and $a_j\ge 3$ for some $j\in \{1,2,\ldots k\}$. Then $p_j^{a_j}$ is not an abelian number, so there exists a non-abelian group of order $p_j^{a_j}$ say $H$. Thus the group 
$G=H\times C_{\frac{n}{p_j^{a_j}}}$, does not contain an abelian subgroup of order $p_j^{a_j}$. From now onwards, suppose that 
$n=p_1^{a_1}p_2^{a_2}\dots p_k^{a_k}$, $k\ge 2$ and $a_j\le 2$ for all $j\in \{1,2,\ldots ,k\}$. If $k\ge 3$ then there exists $p_i$ and $p_j$ such that $p_i|p_j^{a_j}-1$ as $n$ is not an abelian number. Thus, there exists a non-abelian group $H$ of order $p_ip_j^{a_j}$. If we consider the group $G=H\times C_{\frac{n}{p_ip_j^{a_j}}}$, then $G$ does not contain an abelian subgroup of order $p_i^{a_i}p_j^{a_j}$. Finally, the problem boils down to the cases $n=p^2q$ or $p^2q^2,$ it is known that some of these numbers are non-CLT (see~\cite{Baskaran1976,Marius,Struik1,Struik2}). Hence we confine ourselves to the CLT numbers of these forms.
\item If $n=p^2q$ and $n$ is not satisfying the hypothesis, then by Lemma~\ref{ACLT,orderP^2q}, $n$ is not an ACLT number. 
\item Let $n=p^2q^2,$ where $p|q-1, p^2\nmid q-1$. Now, there are two possibilities. If $p=2$, then the dihedral group $D_{2q^2}$ of order $4q^2$ does not have an abelian subgroup of order $4q$ by definition. If $p\not =2$ then by \cite{seyyed2018simple}, consider the group $G$ presented as $$G=\langle a,b,c : a^{p^2}=b^q=c^q=1, a^{-1}ba=b^k,  a^{-1}ca=c^k, cb=bc, ord_q(k)=p\rangle$$ does not contain an abelian subgroup of order $p^2q$. Similarly, for all other CLT numbers of the form $p^2q^2$, there exists a non-ACLT group $H$ of order $p^2q$ by Lemma~\ref{ACLT,orderP^2q} such that the group $G=H\times C_q$ has no abelian subgroup of order $pq^2$.\\
This completes the proof.
 \end{enumerate}

 The following are a few nontrivial examples of ACLT groups.
 \begin{ex}\label{ex:ACLT1}
 Every minimal non-abelian, CLT group is ACLT.
 \end{ex}
 \begin{ex}\label{ex:ACLT2}
Dihedral group $D_n$ of order $2n$ is an ACLT group if and only if $n\in \{p, 2p, 2^k\}$, where $k\in \mathbb N$. 
\end{ex}

\begin{lemma}\label{pro:meta}
 Every ACLT group is metabelian.
 \end{lemma}
 \begin{proof}
     The proof is similar to the proof of Lemma~\ref{pro:metacylcic}
 \end{proof}
 \begin{lemma}\label{Agroup}
     All the Sylow subgroups of an ACLT group that is not a $p$-group are abelian.
 \end{lemma}
 \begin{proof}
     The proof is immediate by using Sylow theorems and the definition of ACLT groups.
 \end{proof}
 In the literature, the groups satisfying the hypothesis of Lemma~\ref{Agroup} are known as {\em $A$-groups}. Therefore, if we exclude $p$-groups, every ACLT group is an $A$-group. To proceed further, we need the following result on $A$-groups, which is a collection of several results or observations given by D.R. Taunt~\cite{taunt_1949}.
 \begin{theorem}[D.R.Taunt~\cite{taunt_1949}]\label{thm:Agroup}
  \begin{enumerate}
   \item \label{thm:Agroup:2} Every solvable $A$-group contains unique maximal abelian normal subgroup.
   \item \label{thm:Agroup:3} Let $Z[G]$ and $G^{\prime}$ denote the center and commutator subgroups of $G$ respectively. Then $Z[G]\cap G^{\prime}=\{e\},$ whenever $G$ is an $A$-group.
  \end{enumerate}
 \end{theorem}
  \par In the literature a subgroup $H$ of a group $G$ is {\em Carter}, if $H$ is both self-normalizing and nilpotent. Carter~\cite{Carter} proved that every finite solvable group has a Carter subgroup and all its Carter subgroups are conjugate.
 \begin{lemma}\label{lem:index}
 If $G$ is a finite, non-abelian, CLT, $A$-group of order $n$, where $\omega(n)\geq 3.$ Then for some $s\in W(n)$ all the subgroups of index $s$ in $G$ are non-abelian.
 \end{lemma}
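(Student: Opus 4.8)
The plan is to argue by contraposition. I will assume that for \emph{every} prime $s\in\pi(n)$ there is at least one abelian subgroup of index $s$, and from this deduce that $G$ must be abelian, contradicting the hypothesis. Throughout I use that a CLT group is solvable (as recorded in the inclusion chain above), so that the machinery of P.\ Hall on Hall subgroups and Sylow bases in finite solvable groups is available. Write $n=p_1^{a_1}\cdots p_k^{a_k}$ with $k\ge 3$.

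First I would turn each abelian subgroup of prime index into an abelian $p_i$-complement. If $H_i$ is abelian of index $p_i$, then $|H_i|=p_i^{a_i-1}\prod_{j\ne i}p_j^{a_j}$, and the set of elements of $H_i$ of order prime to $p_i$ is a subgroup $K_i\le H_i$ of order $\prod_{j\ne i}p_j^{a_j}$. Thus $K_i$ is an abelian Hall $p_i'$-subgroup of $G$, i.e.\ an abelian $p_i$-complement. So the standing assumption yields, for each $i$, an abelian Hall $p_i'$-subgroup.

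Next I would synchronise these complements using a single Sylow basis. Since $G$ is solvable, fix a Sylow basis $P_1,\dots,P_k$ (pairwise permutable Sylow subgroups, one per prime) and form the reduced products $\widehat{P_i}=\prod_{j\ne i}P_j$, which are Hall $p_i'$-subgroups of $G$. Because all Hall $p_i'$-subgroups of a solvable group are conjugate, $\widehat{P_i}$ is conjugate to $K_i$ and is therefore abelian; consequently $[P_j,P_l]=1$ for all $j,l\ne i$. Here $k\ge 3$ does the real work: given any pair $\{j,l\}$ there is a third index $i\notin\{j,l\}$, so $P_j,P_l\subseteq\widehat{P_i}$ and hence $[P_j,P_l]=1$. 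Therefore \emph{all} the basis Sylow subgroups commute pairwise, so $G=P_1\cdots P_k$ is nilpotent; being an $A$-group, each $P_j$ is abelian, and hence $G$ is abelian, the desired contradiction.

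The main obstacle, and the point around which the argument is built, is coherence: an abelian subgroup of index $p_i$ only guarantees that \emph{some} conjugates of the Sylow subgroups for the remaining primes commute, and these witnesses need not be compatible across different $i$. The device that overcomes this is to fix one Sylow basis and exploit conjugacy of Hall subgroups, which forces every reduced product $\widehat{P_i}$ to be abelian simultaneously; the hypothesis $k\ge 3$ is exactly what lets these overlapping abelian Hall subgroups cover every pair of primes. I expect only the routine points to need checking, namely that $K_i$ is a full Hall $p_i'$-subgroup and that the reduced products of a Sylow basis are the Hall $p_i'$-subgroups, both of which are standard.
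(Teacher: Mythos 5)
Your proof is correct, but it takes a genuinely different route from the paper's. The paper argues on the set $S$ of (pairwise non-isomorphic) abelian prime-index subgroups: each such subgroup is maximal, hence either normal or self-normalizing; Taunt's theorem (a solvable $A$-group has a unique maximal abelian normal subgroup) allows at most one normal member of $S$, and Carter's conjugacy theorem for self-normalizing nilpotent subgroups allows at most one self-normalizing member (two would be conjugate, hence isomorphic, contradicting distinctness of their indices); so $|S|\le 2$, contradicting $|\pi(n)|\ge 3$. You instead run the argument through P.~Hall's theory of solvable groups: each abelian subgroup of index $p_i$ contains an abelian Hall $p_i'$-subgroup, conjugacy of Hall subgroups transfers abelianness to the reduced products $\widehat{P_i}$ of a fixed Sylow basis, and $k\ge 3$ lets these overlapping abelian Hall subgroups force all basis Sylow subgroups to commute pairwise, making $G$ abelian --- a contradiction. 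Both proofs use CLT only through solvability and both use $k\ge3$ crucially, but they buy different things: the paper's proof is shorter given Taunt's and Carter's theorems and stays entirely at the level of maximal subgroups, while yours replaces those two specialized results with the standard Hall/Sylow-system machinery and, notably, never actually needs the $A$-group hypothesis (each basis Sylow subgroup $P_j$ already lies in some abelian $\widehat{P_i}$, so your appeal to the $A$-group property is redundant); your argument therefore proves the stronger statement that the conclusion holds for every finite non-abelian solvable group whose order has at least three prime divisors, and it yields the stronger intermediate fact that abelian subgroups of every prime index force $G$ to be abelian.
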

  \begin{proof} 
  Let $S$ be the set of all non-isomorphic prime index abelian subgroups of $G$. Since every element of $S$ is a maximal subgroup of $G,$ then they are either normal or self-normalizing. Suppose the hypothesis is not true, that is for every $s\in W(n)$ there exists an abelian subgroup of index $s$ in G. Therefore $|S|\ge 3.$ Further, from Part~\ref{thm:Agroup:2} of Theorem~\ref{thm:Agroup}, $S$ contains exactly one normal subgroup. Also if any two elements of $S$ are self-normalizing, then they are Carter subgroups, thus they are conjugate to each other \cite{Carter}. This shows that $|S|\le 2.$ Hence, we got a contradiction.
\end{proof}
\begin{theorem}\label{thm:two primes}
 The order of a non-abelian, ACLT group has at most two prime divisors. 
\end{theorem}
\begin{proof}
Let $G$ be a non-abelian ACLT group of order $n$. If $\omega(n)\geq 3$, then by Lemma~\ref{lem:index} $G$ is not ACLT, which completes the proof. 
\end{proof}  
 \par The next goal is to show that the class of ACLT groups is properly contained in the class of supersolvable groups. To prove this result following properties are required. Let $G$ be a group and let $p_1>p_2>\cdots >p_r$ be the distinct prime divisor of $|G|$. Then $G$ is said to satisfy the {\em Sylow tower property} if there exist Sylow subgroups $G_{p_1}, G_{p_2},\ldots ,G_{p_r}$ corresponding to the primes $p_1, p_2,\ldots ,p_r$ respectively such that $G_{p_1}G_{p_2}\cdots G_{p_k}$ is normal in $G$ for $k=1,2,\ldots ,r$. A group $G$ is said to be {\em strictly $p$-closed} if it has a normal Sylow $p$-subgroup, say $G_p$, then $G/G_p$ is abelian of exponent dividing $p-1$. For more details, one can check \cite[Theorem 1.8, 1.9]{bray1982between}.
\begin{lemma}\label{lemma:sylownormal}
Let $G$ be an ACLT group and $q$ be the largest prime divisor of  $|G|$. Then
\begin{enumerate}
 \item \label{lemma:sylownormal:1} Sylow $q$-subgroup is normal in $G.$
 \item\label{lemma:sylownormal:2} If $K$ is the Sylow $q$-subgroup of $G,$  then $G^{\prime}\leq K.$
 \item \label{lemma:sylownormal:3} Further, if $G$ is a non-abelian group, then $C_G(K),$ the centralizer of $K$ in $G$, is the index $p$ abelian subgroup of $G.$
\end{enumerate}
\end{lemma}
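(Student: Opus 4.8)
The plan is to dispose of the two degenerate cases first and then treat the substantive case of a non-abelian $G$ with exactly two prime divisors. If $G$ is abelian, every Sylow subgroup is normal and $G'=\{e\}$, so all three assertions are immediate; if $G$ is a non-abelian $p$-group, then $q=p$ and $K=G$, and again there is nothing to prove. By Theorem~\ref{thm:two primes} the only remaining possibility is $|G|=p^aq^b$ with $p<q$, and throughout this case I will use that $G$ is an $A$-group (its Sylow subgroups are abelian) together with the abelian subgroup $H$ of order $|G|/p$ furnished by the ACLT hypothesis, exactly as in Proposition~\ref{pro:meta}. Since $[G:H]=p$ is the smallest prime dividing $|G|$, the subgroup $H$ is normal in $G$.

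For part (1) I would argue that $K$ is characteristic in $H$. As $H$ is abelian of order $p^{a-1}q^b$, its Sylow $q$-subgroup is unique and hence characteristic in $H$; it has order $q^b$, so it is a full Sylow $q$-subgroup of $G$, and may be taken to be $K$. A characteristic subgroup of the normal subgroup $H$ is normal in $G$, so $K\trianglelefteq G$.

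For part (2), I would pass to the quotient $G/K$. This has order $p^a$, so it is a $p$-group, and being a homomorphic image of the $A$-group $G$ it is itself an $A$-group by Theorem~\ref{thm:Agroup}(\ref{thm:Agroup:1}); a $p$-group whose only Sylow subgroup is itself is abelian, so $G/K$ is abelian and therefore $G'\le K$.

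The heart of the lemma is part (3), and this is where I expect the real work to lie. Since $K$ is abelian and is contained in the abelian group $H$, every element of $H$ commutes with $K$, giving $H\le C_G(K)$ and hence $[G:C_G(K)]\le p$. Because $K\le C_G(K)$, the index $[G:C_G(K)]$ is a power of $p$, so it only remains to rule out $C_G(K)=G$. If $K$ were central, then part (2) would give $G'\le K\le Z(G)$, whence $G'\le Z(G)\cap G'=\{e\}$ by Theorem~\ref{thm:Agroup}(\ref{thm:Agroup:3}), forcing $G$ to be abelian and contradicting our hypothesis. Therefore $[G:C_G(K)]=p$, and since $H\le C_G(K)$ with $|H|=|G|/p=|C_G(K)|$ we conclude $C_G(K)=H$, which is the required abelian subgroup of index $p$. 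The main obstacle is precisely this two-sided estimate on the index: the upper bound depends on producing the large abelian subgroup $H$ from the ACLT property and verifying that it centralizes $K$, while the lower bound rests on the $A$-group identity $Z(G)\cap G'=\{e\}$ together with part (2).
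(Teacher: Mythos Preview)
Your proof is correct and follows the same overall architecture as the paper's: reduce to the non-abelian two-prime case via Theorem~\ref{thm:two primes}, use the index-$p$ abelian subgroup $H$ to locate and control the Sylow $q$-subgroup $K$, and for Part~(3) combine $H\le C_G(K)$ with the $A$-group identity $Z(G)\cap G'=\{e\}$ to exclude $C_G(K)=G$.

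The only genuine difference is in Part~(1). The paper argues via Sylow counting: from $H\le N_G(K)$ one has $N_G(K)\in\{H,G\}$, and $N_G(K)=H$ would force $n_q=p$, contradicting $n_q\equiv 1\pmod q$ with $p<q$. You instead observe that $H\trianglelefteq G$ (smallest-prime index) and that $K$, being the unique Sylow $q$-subgroup of the abelian group $H$, is characteristic in $H$ and hence normal in $G$. Both are short and elementary; your route avoids invoking the third Sylow theorem at the cost of using the ``characteristic in normal implies normal'' lemma, and in fact makes the normality of $H$ explicit, which the paper's Part~(1) does not need. Your Part~(2) via the abelianness of the $A$-group quotient $G/K$ is a rephrasing of the paper's semidirect-product argument (the paper implicitly uses that the Sylow $p$-subgroup $L\cong G/K$ is abelian).
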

\begin{proof} Proof of Part~\ref{lemma:sylownormal:1}. 
If $G$ is abelian or a $p$-group, then the result holds. Suppose $G$ is a non-abelian group and $|G| = p^{a} q^{b},$ where $p<q$ by Theorem~\ref{thm:two primes}. Since $G$ is an ACLT group, it has an abelian subgroup $H$ of index $p.$ If $K$ is a subgroup of $H$ of order $q^b,$ then $K$ is also a Sylow $q$-subgroup of $G.$ Further $H\le N_G(K)$ by using the fact that $H$ is abelian. This implies that either $N_G(K)=H$ or $N_G(K)=G.$ If $N_G(K)=H$, then $n_q(G)=p$ which is not possible from Sylow's third theorem. Hence $K$ is normal in $G.$ \\
Proof of Part~\ref{lemma:sylownormal:2}.
We observe from Part~\ref{lemma:sylownormal:1} that $G$ is isomorphic to $K\rtimes L,$ where $L$ is a Sylow $p$-subgroup of $G$. By using the definition of commutator subgroup, we can conclude that $G^{\prime}\le K$. Hence, the result follows.  \\
Proof of Part~\ref{lemma:sylownormal:3}.
 Since $H$ is abelian then $H\leq C_G(K).$ Consequently, either $C_G(K)=H$ or $C_G(K)=G.$ If $C_G(K)=G$, then $K\leq Z(G).$ But by using Part~2 we have $G^{\prime}\le K.$ Hence we got contradiction by part~\ref{thm:Agroup:3} of Theorem~\ref{thm:Agroup} and by using the fact that $G$ is non-abelian.
\end{proof}
\begin{theorem}\label{thm:super}
 Every ACLT group is supersolvable.
\end{theorem}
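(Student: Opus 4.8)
The plan is to dispose of the trivial cases and concentrate on the essential one. If $G$ is abelian or a $p$-group it is nilpotent, hence supersolvable, so assume $G$ is non-abelian and not a $p$-group. Then $G$ is an $A$-group and, by Theorem~\ref{thm:two primes}, $|G|=p^aq^b$ with $p<q$. Let $K$ be the Sylow $q$-subgroup; by Lemma~\ref{lemma:sylownormal} it is normal, $G\cong K\rtimes L$ for a Sylow $p$-subgroup $L$, and $H=C_G(K)$ is an abelian subgroup of index $p$ (so $H\triangleleft G$, being the centralizer of the normal subgroup $K$). Since $G$ is an $A$-group, both $K$ and $L$ are abelian, and the conjugation action of $G$ on $K$ has kernel $H$, hence factors through the cyclic group $G/H\cong C_p$. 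I fix $\sigma\in L$ of order $p$ whose image generates $G/H$, so $\sigma$ acts on $K$ with order $p$.

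The heart of the argument is to show that this action satisfies $[K:C_K(\sigma)]=q$, equivalently that $K$ admits a $G$-invariant series with factors of order $q$. Here I would invoke the ACLT hypothesis on the divisor $|G|/q=p^aq^{b-1}$: it produces an abelian subgroup $A$ of this order. The Sylow $p$-subgroup $P$ of $A$ has order $p^a$, hence is a full Sylow $p$-subgroup of $G$ and a conjugate of $L$; since $L\not\le H$ and $H\triangleleft G$, also $P\not\le H$, so $P$ contains an element $\sigma'$ acting nontrivially on $K$. As $A$ is abelian, $\sigma'$ centralizes the $q$-part $Y$ of $A$, which has order $q^{b-1}$, whence $Y\le C_K(\sigma')$. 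Because $\sigma'$ and $\sigma$ both induce generators of the cyclic action $G/H$, conjugacy gives $|C_K(\sigma')|=|C_K(\sigma)|$, so $q^{b-1}\le|C_K(\sigma)|$ and $[K:C_K(\sigma)]\le q$. As $\sigma$ acts nontrivially, this index is exactly $q$, and by coprime action $K=C_K(\sigma)\times[K,\sigma]$ with $[K,\sigma]\cong C_q$ (incidentally forcing $p\mid q-1$).

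Finally I would assemble a normal series of $G$ with cyclic factors. Every subgroup of $C_K(\sigma)$ is normal in $G$, being centralized by the abelian $K$ and fixed pointwise by $L$ (which acts through powers of $\sigma$); thus a chain $1=F_0<F_1<\cdots<F_r=C_K(\sigma)$ with $|F_{i+1}/F_i|=q$ consists of $G$-normal subgroups with cyclic factors, and then $K/C_K(\sigma)\cong[K,\sigma]\cong C_q$ is cyclic. Above $K$, the quotient $G/K\cong L$ is an abelian $p$-group, so it has a chain of subgroups of successive index $p$ which pulls back (all terms normal in $G$ since $G/K$ is abelian) to a $G$-normal chain from $K$ to $G$ with factors $C_p$. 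Concatenating these chains yields a normal series of $G$ all of whose factors are cyclic, so $G$ is supersolvable.

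The main obstacle is precisely the middle step: the $A$-group structure alone does not force cyclic chief factors inside $K$, since the Frobenius group $C_q^{d}\rtimes C_p$ with $d=\operatorname{ord}_p(q)>1$ is an $A$-group that is \emph{not} supersolvable. Hence the ACLT hypothesis must enter decisively, and the delicate point is to extract from a single abelian subgroup of order $|G|/q$ the global bound $[K:C_K(\sigma)]=q$ rather than merely the existence of one nonzero fixed point.
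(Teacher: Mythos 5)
Your proof is correct, but it takes a genuinely different route from the paper's. The paper, after reaching the same setup (non-abelian $G$ of order $p^aq^b$, $p<q$, normal Sylow $q$-subgroup $K$, abelian $C_G(K)$ of index $p$ via Lemma~\ref{lemma:sylownormal}), finishes by verifying a known criterion: $G$ has a Sylow tower, $N_G(K)/C_G(K)\cong C_p$ and $N_G(L)/C_G(L)$ are strictly $p$-closed, and then Theorem~$1.12$ of \cite{Bray} is invoked as a black box. You instead build the supersolvable series by hand, and to do so you apply the ACLT hypothesis a \emph{second} time, to the divisor $|G|/q$: the abelian subgroup $A$ of order $p^aq^{b-1}$ forces $[K:C_K(\sigma)]=q$, after which the chain $1<\cdots<C_K(\sigma)<K<\cdots<G$ with prime cyclic factors is immediate, since $C_K(\sigma)\le Z(G)$ (so all its subgroups are $G$-normal) and $G/K$ is abelian. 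This buys a self-contained argument that makes the role of the ACLT hypothesis transparent; your closing observation that the $A$-group property alone cannot suffice (witness the non-supersolvable $A$-group $C_q^{d}\rtimes C_p$) is exactly right and explains why the extra application of ACLT is unavoidable in a direct proof. Your version also yields structural information the paper's citation-based proof does not state, namely $K=C_K(\sigma)\times C_q$ and the divisibility $p\mid q-1$. One small repair: an element $\sigma\in L$ of order exactly $p$ lying outside $H=C_G(K)$ need not exist (if $L$ is cyclic, its unique subgroup of order $p$ may sit inside $L\cap H$); you should simply take any $\sigma\in L\setminus H$, whose conjugation action on $K$ automatically has order $p$ because the action factors faithfully through $G/H\cong C_p$ --- which is all your argument actually uses.
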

\begin{proof}
Let $G$ be an ACLT group. If $G$ is abelian, then the result is true. Now, assume that $G$ is a non-abelian ACLT group of order $p^a q^b,$ where $p$ and $q$ are prime numbers. If $p=q,$ then $G$ is supersolvable. If $p<q$ then by Part~\ref{lemma:sylownormal:1} of Lemma~\ref{lemma:sylownormal}, $G$ has a normal Sylow $q$-subgroup say $K$. If $L$ is a Sylow $p$-subgroup of $G,$ then $KL=G$. This implies that $K$ and $KL$ are normal in $G.$ Thus $G$ has a Sylow-tower $\{e\}\le K\le KL.$ Now by Part~\ref{lemma:sylownormal:3} of Lemma~\ref{lemma:sylownormal}, $N_G(K)/C_G(K)$ is a group of order $p,$ thus $N_G(K)/C_G(K)$ is strictly $p$-closed. Also, let $L$ be any Sylow $p$-subgroup of $G$ and $N$ be an abelian subgroup of index $q$ in $G$. Then $N_G(L)=C_G(L)=N$. Therefore $N_G(L)/C_G(L)$ is also strictly $p$-closed. Hence, the result follows from \cite[Theorem~1.12]{bray1982between}. 
\end{proof}
\par Hence, the class of ACLT groups is properly contained in the class of supersolvable groups, because the group $D_{12}$ is supersolvable but not ACLT. Next, we show that ACLT groups are closed under passage to subgroups and the necessary and sufficient conditions for the direct product of ACLT groups to be ACLT.
\begin{theorem}\label{thm: subACLT}
 Every subgroup of an ACLT group is ACLT.
\end{theorem}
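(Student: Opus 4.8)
The plan is to split along the trichotomy already established for ACLT groups: by Theorem~\ref{thm:super} and Theorem~\ref{thm:two primes}, an ACLT group $G$ is either abelian, a $p$-group, or a non-abelian group with $|G|=p^aq^b$ where $p<q$. If $G$ is abelian the claim is immediate, since every subgroup of an abelian group is abelian and abelian groups are ACLT, so only the last two cases need work.

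For the $p$-group case I would first record the reduction that a group of order $p^n$ is ACLT if and only if it has an abelian subgroup of index $p$: one direction is the definition, and conversely such an abelian maximal subgroup $A$ already contains abelian subgroups of every order $p^i$ with $i\le n-1$, which are then abelian subgroups of $G$ of all the required orders. With this in hand I would run the maximal-counterexample argument used for CCLT in Theorem~\ref{thm:CCLTsubgroup}. Suppose $k$ is largest such that $G$ has a non-ACLT subgroup $H$ of order $p^k$; since groups of order at most $p^3$ are ACLT and $G$ itself is ACLT, we have $4\le k<n$. Choose $N$ with $H\trianglelefteq N\le G$ and $|N|=p^{k+1}$; by maximality of $k$, $N$ is ACLT, hence has an abelian subgroup $A$ of order $p^k$. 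If $H=A$ then $H$ is abelian, a contradiction, so $H\ne A$ forces $HA=N$ and $|H\cap A|=p^{k-1}$. Then $H\cap A$ is an abelian subgroup of index $p$ in $H$, so $H$ is ACLT by the reduction, contradicting the choice of $H$.

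The non-$p$-group case is the heart of the argument. Here I would use Lemma~\ref{lemma:sylownormal} to write $G=K\rtimes L$ with $K$ the abelian normal Sylow $q$-subgroup, $L$ an abelian Sylow $p$-subgroup, and $C:=C_G(K)$ the abelian subgroup of index $p$. The key structural input is the commutator estimate $|[K,L]|=q$: since $G$ is ACLT it has an abelian subgroup of order $p^aq^{b-1}$, whose full Sylow $p$-part centralizes its $q$-part, forcing $|C_K(L)|\ge q^{b-1}$; coprimality gives the decomposition $K=C_K(L)\times[K,L]$, so $|[K,L]|\le q$, and non-abelianness of $G$ makes $[K,L]\ne 1$, whence $|[K,L]|=q$. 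Now let $H\le G$. If $H$ is abelian, or a $p$-group, or a $q$-group, then it is abelian, using that $G$ is an $A$-group, and hence ACLT; so assume $|H|=p^cq^d$ with $c,d\ge 1$ and write $H=Q\rtimes P$ with $Q=H\cap K$ and $P$ an abelian Sylow $p$-subgroup of $H$. Intersecting with $C$ yields an abelian subgroup $H\cap C$ of index $p$ in $H$ (or all of $H$, in which case $H$ is abelian), supplying abelian subgroups of all orders $p^iq^j$ with $i\le c-1$ and $j\le d$. For the remaining orders $p^cq^j$ with $j\le d-1$ I would use that $[Q,P]\le[K,L^g]$ for a suitable conjugate $L^g\supseteq P$, so $|[Q,P]|\le q$ and therefore $|C_Q(P)|\ge q^{d-1}$; then $P\times C_Q(P)$ is abelian and contains abelian subgroups of every order $p^cq^j$ with $j\le d-1$. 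These two families together realize every proper divisor of $|H|$, so $H$ is ACLT.

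I expect the main obstacle to be the structural commutator lemma $|[K,L]|=q$ together with the verification that the two families of abelian subgroups produced, those inside $H\cap C$ and those inside $P\times C_Q(P)$, exhaust all proper divisors of $p^cq^d$. The coprime decomposition $K=C_K(L)\times[K,L]$ and the identity $[Q:C_Q(P)]=|[Q,P]|$ are the technical facts doing the real work, and some care is needed because the bound on $[K,L]$ must be controlled simultaneously for all Sylow $p$-subgroups rather than a single fixed one.
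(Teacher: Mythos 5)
Your proof is correct, and while your $p$-group case is essentially the paper's (the paper's Lemma~\ref{lem:pn} defers to the maximal-counterexample argument of Theorem~\ref{thm:CCLTsubgroup}, which you reproduce with the cyclic conclusion replaced by abelian), your treatment of the two-prime case is genuinely different. The paper proceeds by induction down the subgroup lattice: supersolvability (Theorem~\ref{thm:super}) forces maximal subgroups to have prime index, and for a non-abelian maximal subgroup $M$ the product formula $|HM|=|H||M|/|H\cap M|\le |G|$ applied to the abelian subgroups $H$, $K$ of orders $p^{a-1}q^b$, $p^aq^{b-1}$ pins down $|H\cap M|$ and $|K\cap M|$, so $M$ inherits abelian subgroups of both prime indices and is ACLT by Lemma~\ref{lem:submaximal:1}; iterating through maximal chains covers all subgroups. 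You instead argue in one step for an arbitrary subgroup: using Lemma~\ref{lemma:sylownormal} to write $G=K\rtimes L$ with $C=C_G(K)$ abelian of index $p$, the coprime (Fitting-type) decomposition $K=C_K(L)\times[K,L]$ together with the abelian subgroup of order $p^aq^{b-1}$ gives $|[K,L]|=q$, and then for $H=Q\rtimes P$ the two abelian subgroups $H\cap C$ (index at most $p$) and $P\times C_Q(P)$ (order at least $p^cq^{d-1}$, via $|[Q,P]|\le|[K,L^g]|=q$) jointly cover every proper divisor of $|H|$. Your route buys a direct, induction-free argument and sharper structural information (every subgroup of a non-abelian ACLT group has commutator of order dividing $q$), at the cost of importing the standard coprime-action decomposition for abelian normal subgroups, whereas the paper's argument stays elementary, needing only the product formula and the prime-index property of maximal subgroups in supersolvable groups. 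You also correctly flag and resolve the one subtlety in your approach: the bound on $[K,L]$ transfers to every Sylow $p$-subgroup by conjugacy, since $K$ is normal so $[K,L^g]=[K,L]^g$.
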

We need a few lemmas before proving Theorem~\ref{thm: subACLT}. 
\begin{lemma}\label{lem:pn}
 Every subgroup of an ACLT group of order $p^n,$ where $p$ is a prime number and $n\in \N$ is ACLT. 
\end{lemma}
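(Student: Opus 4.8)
The plan is to mimic the $p$-group case of Theorem~\ref{thm:CCLTsubgroup} through a maximal-counterexample argument, exploiting the fact that for a $p$-group of order $p^k$ the ACLT condition asks only for an abelian subgroup of each order $p^j$ with $j<k$. The crucial simplification over the cyclic case is that a \emph{single} abelian subgroup of order $p^{k-1}$ already witnesses the entire ACLT property of a group of order $p^k$: an abelian $p$-group of order $p^{k-1}$ contains subgroups of every order $p^j$ for $j\le k-1$, and all of them are abelian, so they supply the required abelian subgroups of orders $p^0,p^1,\ldots,p^{k-1}$. This is why it will be enough to exhibit one abelian subgroup of the right size.

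First I would dispose of the small cases. Groups of order $p$, $p^2$ are abelian, and every group of order $p^3$ contains an abelian subgroup of order $p^2$ (by the bound $k(k+1)\ge 2m$ already quoted in the proof of Theorem~\ref{thm:ACLT}, taking $m=3$), so groups of order $p,p^2,p^3$ are ACLT. Hence any non-ACLT $p$-group has order at least $p^4$, and it suffices to treat $G$ of order $p^n$ with $n\ge 4$. I then argue by contradiction: choose $k$ maximal such that $G$ has a non-ACLT subgroup $H$ of order $p^k$. Since $G$ itself is ACLT we have $k\le n-1$, and by maximality of $k$ every subgroup of $G$ of order $p^{k+1}$ is ACLT.

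Next, using that a proper subgroup of a $p$-group is properly contained in its normalizer, I would select a subgroup $N$ with $H\trianglelefteq N$ and $|N|=p^{k+1}$; by the previous step $N$ is ACLT, so it has an abelian subgroup $A$ of order $p^k$. Both $A$ and $H$ have index $p$ in $N$, hence are maximal and normal in $N$. If $A=H$, then $H$ is abelian and therefore ACLT, a contradiction. Otherwise $A\ne H$ forces $AH=N$, so
\[
|A\cap H|=\frac{|A|\,|H|}{|AH|}=\frac{p^k\cdot p^k}{p^{k+1}}=p^{k-1},
\]
and $A\cap H$, being a subgroup of the abelian group $A$, is an abelian subgroup of $H$ of order $p^{k-1}$.

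Finally, by the simplification noted in the first paragraph this abelian subgroup of order $p^{k-1}$ produces abelian subgroups of $H$ of every order $p^j$ with $j<k$, so $H$ is ACLT, contradicting the choice of $H$. I expect the only delicate points to be the existence of $N$ with $H$ normal of index $p$ (which rests on $N_G(H)\supsetneq H$ for proper subgroups of $p$-groups) and the clean product formula $AH=N$ coming from maximality; once these are in place the contradiction closes the proof, and the base-case verification for order $p^3$ is the one routine check worth stating explicitly.
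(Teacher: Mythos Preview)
Your argument is correct and follows essentially the same route as the paper: the paper simply says that the proof is the same as the $p$-group case of Theorem~\ref{thm:CCLTsubgroup}, which is precisely the maximal-counterexample argument you carry out (pick $k$ maximal with a non-ACLT subgroup $H$ of order $p^k$, embed $H$ normally in an ACLT subgroup $N$ of order $p^{k+1}$, take an abelian subgroup $A\le N$ of order $p^k$, and use the product formula to get $|A\cap H|=p^{k-1}$). Your write-up is in fact more careful than the paper's, since you make explicit the case $A=H$, the existence of $N$ via $N_G(H)\supsetneq H$, and the observation that a single abelian subgroup of order $p^{k-1}$ already certifies the full ACLT property of $H$.
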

  \begin{proof}
The proof of this is same as the proof of Theorem~\ref{thm:CCLTsubgroup}.
  \end{proof}  
\begin{lemma}\label{lem:submaximal:1}
 Let $G$ be a finite group of order $n=p^a q^b,$ where $p$ and $q$ are prime numbers. If $G$ has abelian subgroups of order $n_1=p^{a-1} q^b$ and $n_2=p^a q^{b-1}$ then $G$ is ACLT.
\end{lemma}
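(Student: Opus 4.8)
The plan is to leverage the two given maximal abelian subgroups and reduce the ACLT condition to a purely combinatorial statement about exponents. Write $M_1$ for the abelian subgroup of order $p^{a-1}q^b$ and $M_2$ for the abelian subgroup of order $p^aq^{b-1}$; these have index $p$ and $q$ in $G$ respectively. Every divisor of $n=p^aq^b$ has the form $p^iq^j$ with $0\le i\le a$ and $0\le j\le b$, and the only such divisor equal to $n$ itself is the pair $(i,j)=(a,b)$. Hence, to verify that $G$ is ACLT it suffices to produce, for each pair $(i,j)\ne(a,b)$ in this range, an abelian subgroup of $G$ of order $p^iq^j$.

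The first ingredient I would invoke is the standard fact that a finite abelian group has a subgroup of every order dividing its order (immediate from the fundamental theorem of finite abelian groups), together with the triviality that any subgroup of an abelian group is again abelian, and that a subgroup of $M_i$ is a subgroup of $G$. Applying this to $M_1$ yields abelian subgroups of $G$ of every order $p^iq^j$ with $0\le i\le a-1$ and $0\le j\le b$. Applying it to $M_2$ yields abelian subgroups of every order $p^iq^j$ with $0\le i\le a$ and $0\le j\le b-1$.

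It then remains only to check that these two families jointly exhaust all the required pairs. A pair $(i,j)$ in the admissible range fails to be supplied by $M_1$ precisely when $i=a$, and fails to be supplied by $M_2$ precisely when $j=b$; therefore the single pair missed by both is $(a,b)$, which corresponds to the whole group $G$ and is excluded from the ACLT requirement. Consequently $G$ has an abelian subgroup of every order $d\mid n$ with $d<n$, so $G$ is ACLT. (If $p=q$ the lemma degenerates: $M_1$ alone, being abelian of order $p^{a+b-1}$, already contains subgroups of all smaller $p$-power orders, and the conclusion is immediate.)

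I do not expect a genuine obstacle here, since the substance is the elementary observation that the two exponent-slabs $\{\,i\le a-1\,\}$ and $\{\,j\le b-1\,\}$ cover the entire divisor lattice of $p^aq^b$ apart from its top element $(a,b)$. The only point demanding any care is the invoked fact that a finite abelian group realises every divisor of its order as the order of some subgroup, as this is exactly what allows each maximal abelian subgroup to furnish an entire slab of the lattice at once.
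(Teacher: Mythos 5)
Your proof is correct and follows essentially the same route as the paper's: both arguments observe that every proper divisor $d$ of $p^aq^b$ divides either $p^{a-1}q^b$ or $p^aq^{b-1}$, and then invoke the fact that a finite abelian group has an (abelian) subgroup of every order dividing its order. Your write-up merely makes explicit the exponent-lattice covering and the degenerate case $p=q$, which the paper leaves implicit.
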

\begin{proof}
Let $H$ and $K$ be abelian subgroups of $G$ order $n_1$ and $n_2$ respectively. We can easily show that if $d|n$ and $d<n$ then either $d|n_1$ or $d|n_2.$ Thus $G$ has an abelian subgroup of order $d.$ Hence $G$ is ACLT.
\end{proof}
\begin{lemma}\label{lem:submaximal:2}
 Let $G$ be an ACLT group of order $p^a q^b$, where $p<q$. Then, all maximal subgroups of $G$ are ACLT.
\end{lemma}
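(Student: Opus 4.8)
The plan is to reduce everything to Lemma~\ref{lem:submaximal:1}: a maximal subgroup $M$ of order $p^cq^d$ with $c,d\ge 1$ will be ACLT as soon as I exhibit abelian subgroups of $M$ of orders $p^{c-1}q^d$ and $p^cq^{d-1}$, while if $M$ happens to be a $p$- or $q$-group then it is contained in an abelian Sylow subgroup (recall that, once $G$ is not a $p$-group, every ACLT group is an $A$-group) and is therefore abelian. If $G$ itself is abelian every subgroup is abelian and there is nothing to prove, so I assume $G$ non-abelian. First I would record the structure supplied by the earlier results: by Theorem~\ref{thm:super} $G$ is supersolvable, so every maximal subgroup has prime index, namely $p$ or $q$; by Lemma~\ref{lemma:sylownormal} the Sylow $q$-subgroup $K$ is normal and abelian, $G=K\rtimes L$ with $L$ an abelian Sylow $p$-subgroup, and $C_G(K)=K\times C_L(K)$ is abelian of index $p$, forcing $[L:C_L(K)]=p$. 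Since $G$ is ACLT it has an abelian subgroup of order $p^aq^{b-1}$, whose $p$-part is a full Sylow $p$-subgroup; conjugating, this shows $|C_K(L)|\ge q^{b-1}$, and non-abelianness rules out equality with $q^b$, so $|C_K(L)|=q^{b-1}$.

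Next I would treat a maximal subgroup $M$ of index $q$, so $|M|=p^aq^{b-1}$; such an $M$ contains a full Sylow $p$-subgroup, which I take to be $L$, and $K_M:=M\cap K$ is its normal Sylow $q$-subgroup of order $q^{b-1}$. For the order $|M|/p=p^{a-1}q^{b-1}$ I would use that $C_L(K)\le C_L(K_M)$, whence $|C_L(K_M)|\ge p^{a-1}$; if this forces $M=K_M\times L$ to be abelian we are done, and otherwise $K_M\times C_L(K_M)$ is an abelian subgroup of the required order. For the order $|M|/q=p^aq^{b-2}$ I would intersect inside $K$: since $K_M$ and $C_K(L)$ both have order $q^{b-1}$ in the group $K$ of order $q^b$, their intersection $C_{K_M}(L)=K_M\cap C_K(L)$ has order at least $q^{b-2}$, and any subgroup of it of order $q^{b-2}$, together with $L$ (which centralizes it), gives an abelian subgroup of order $p^aq^{b-2}$. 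Lemma~\ref{lem:submaximal:1} then yields that $M$ is ACLT.

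Then I would treat a maximal subgroup $M$ of index $p$, so $|M|=p^{a-1}q^b$; here $M\supseteq K$ and $M=K\rtimes M_p$ with $M_p$ a Sylow $p$-subgroup of $M$ of order $p^{a-1}$, which I may assume lies in $L$. If $M_p\le C_L(K)$ then $M=K\times M_p$ is abelian and ACLT. Otherwise the image of $M_p$ in $\operatorname{Aut}(K)$ coincides with that of $L$, both of order $p$, so $C_K(M_p)=C_K(L)$ has order $q^{b-1}$ and $M_p\cap C_L(K)$ has index $p$ in $M_p$, i.e. order $p^{a-2}$. Then $K\times(M_p\cap C_L(K))$ is an abelian subgroup of order $p^{a-2}q^b=|M|/p$ and $C_K(M_p)\times M_p$ is an abelian subgroup of order $p^{a-1}q^{b-1}=|M|/q$, so Lemma~\ref{lem:submaximal:1} again gives that $M$ is ACLT.

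Finally I would dispose of the degenerate orders: if $a=1$ an index-$p$ maximal subgroup is $K$, and if $b=1$ an index-$q$ one is $L$, both abelian. I expect the index-$q$ case to be the delicate point, because there $M\cap K$ is an essentially arbitrary subgroup of $K$ of order $q^{b-1}$ with no a priori relation to the fixed subgroup $C_K(L)$; the whole argument hinges on pinning down $|C_K(L)|=q^{b-1}$ and on the intersection estimate $|K_M\cap C_K(L)|\ge q^{b-2}$ to guarantee an abelian subgroup of order exactly $p^aq^{b-2}$. Once these order computations are secured, every case reduces cleanly to Lemma~\ref{lem:submaximal:1}.
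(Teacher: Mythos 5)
Your proof is correct, but it takes a genuinely different and considerably heavier route than the paper's. Both arguments reduce to Lemma~\ref{lem:submaximal:1}; the difference lies in how the two abelian subgroups of orders $|M|/p$ and $|M|/q$ inside a non-abelian maximal subgroup $M$ are produced. You build them structurally: via Lemma~\ref{lemma:sylownormal} you write $G=K\rtimes L$ with $K,L$ abelian, pin down $|C_L(K)|=p^{a-1}$ and $|C_K(L)|=q^{b-1}$, and then run a case analysis on the index of $M$ (with WLOG conjugations and the action of $M_p$ on $K$) to exhibit explicit abelian subgroups such as $K_M\times C_L(K_M)$ and $C_K(M_p)\times M_p$. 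The paper instead never decomposes $G$ at all: it takes the abelian subgroups $H$ and $K$ of orders $p^{a-1}q^b$ and $p^aq^{b-1}$ that the ACLT hypothesis hands over for free, and for any non-abelian maximal $M$ applies the product formula $|HM|=|H||M|/|H\cap M|\le |G|$ to force $|H\cap M|\ge p^{a-2}q^b$; since $H\cap M$ is a subgroup of $H$ and $M\ne H$ (here $p<q$ rules out the intermediate divisors), one gets $|H\cap M|$ exactly $|M|/p$, and symmetrically $|K\cap M|=|M|/q$, so $H\cap M$ and $K\cap M$ are precisely the abelian subgroups Lemma~\ref{lem:submaximal:1} wants. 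The paper's intersection trick is a few lines, needs only supersolvability (Theorem~\ref{thm:super}, to know maximal subgroups have prime index) plus ACLT, treats both indices uniformly, and avoids all degenerate-exponent bookkeeping; your argument costs more machinery (the $A$-group property, Lemma~\ref{lemma:sylownormal}, centralizer and fixed-point computations) but in exchange yields explicit structural descriptions of the maximal subgroups and of the abelian subgroups witnessing their ACLT property, information the paper's counting argument does not provide.
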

\begin{proof}
By Theorem~\ref{thm:super}, $G$ is supersolvable. Thus, all maximal subgroups of $G$ are of prime index by \cite[Theorem~1.7]{bray1982between}. This implies that the order of any maximal subgroup of $G$ is either $p^{a-1} q^b$ or $p^a q^{b-1}$. 
Let $M$ and $N$ be any two non-abelian subgroups of $G$ of order $p^{a-1} q^b$ and $p^a q^{b-1}.$ Since $G$ is ACLT, it has abelian subgroups of order $p^{a-1} q^b$ and $p^a q^{b-1}$ let us call them $H$ and $K$ respectively. Also $HM\subseteq G$ and $KM\subseteq G$ and $$|HM|=\frac{|H|\times |M|}{|H\cap M|}=\frac{p^{2a-2}q^{2b}}{|H\cap M|}\leq {p^aq^b}.$$ Consequently $|H\cap M|=p^{a-2}q^b,$ similarly one can see that $|K\cap M|=p^{a-1}q^{b-1}.$ Also, it is easy to see that $|H\cap N|=p^{a-1} q^{b-1}$ and $|K\cap N|=p^a q^{b-2}.$ Therefore by Lemma~\ref{lem:submaximal:1}, $M$ and $N$ are ACLT.
\end{proof}
\noindent \textbf{Proof of Theorem~\ref{thm: subACLT}:} The proof is given by induction on the row index of the Hasse diagram of $G,$ starting from the topmost row. The result is trivially true for the first row as $G$ is ACLT. Let us assume that the result is true for the $k^{th}$ row, that is all the subgroups of $G$ in the $k^{th}$ row of the Hasse diagram are ACLT. Let $H$ be a subgroup of $G$ in $(k+1)^{th}$ row. Then $H$ is a maximal subgroup of some subgroup of $G$ in the $k^{th}$ row, say $N$. By induction, hypothesis $N$ is ACLT. Therefore by using Lemma~\ref{lem:submaximal:2} $H$ is ACLT. Hence, all subgroups of $G$ are ACLT.
\begin{theorem}
 Let $H$ and $K$ be the groups with $m$ and $n$ elements, respectively. If $H\times K$ is an ACLT group, then both $H$ and $K$ are ACLT, and one of them is abelian. Conversely, if $H$ is an ACLT group and $K$ is abelian, then $H\times K$ is an ACLT group if one of the following conditions holds.
 \begin{enumerate}
  \item $H$ is an abelian group.
  \item $H$ is non-abelian and $p|n,$ then $p|m.$
 \end{enumerate}
\end{theorem}
\begin{proof}
If $H$ is a non-ACLT group, then there exists $d\in \N$ such that $d|m$ and $H$ has no abelian subgroup of order $d.$ Then $H\times K$ has no abelian subgroup of order $d^{k+1},$ where $d^k|n$ but $d^{k+1}\not|n.$ Let $H$ and $K$ be non-abelian groups. Then it is well known that $H\times K$ has no abelian subgroup of order $\frac{mn}{p},$ where $p$ is the smallest prime dividing $m.$ Therefore, $H\times K$ is ACLT then both $H$ and $K$ are ACLT and one of them is abelian. Conversely, if $H$ and $K$ both are abelian, then $H\times K$ is abelian, hence ACLT. If $H$ is non-abelian and $p|n$ but $p\not|m$ then $H\times K$ has no abelian subgroup of order $\frac{mn}{p}.$ This completes the proof.
\end{proof}
\section{CCLT-degree}\label{CCLT-degree}
Viewing group-theoretic problems through a statistical framework allows for the discovery of probabilistic patterns and helps to understand their asymptotic nature. Recently,  T\u{a}rn\u{a}uceanu~\cite{tuarnuauceanu2024clt} has defined the following function on the set of all finite groups 
$$d_{CLT}(G)=\frac{D(G)}{\tau (|G|)},$$
where $D(G)$ denotes the number of divisors $d$ of $|G|$ for which there exists a subgroup of $G$ of order $d$. This function is known as {\em CLT-degree} of $G$ and it measures the probability that a random subgroup of $G$ to be CLT.

In this sequence, we also define a function on the set of non-cyclic finite groups as 
$$d_{CCLT}(G)=\frac{D_c(G)}{\tau (|G|)-1},$$ where $D_c(G)$ denotes the number of divisors $d$ of $|G|$ for which there exists a cyclic subgroup of $G$ of order $d$. This function measures the probability that a subgroup to be CCLT. We call this function as {\em CCLT-degree} of $G$. It is easy to see that this function satisfies the following properties:
\begin{enumerate}
    \item $0<d_{CCLT}(G)\leq 1$, for any non-cyclic group $G$. Moreover, $d_{CCLT}(G) = 1$ if and only if $G$ is a CCLT group.
    \item $d_{CCLT}(G_1\times G_2)=d_{CCLT}(G_1)\times d_{CCLT}(G_2)$ if and only if $gcd(|G_1|,|G_2|)=1$.
    \item If $G$ is a $p$-group of order $p^n$, then 
    $$d_{CCLT}(G)\geq \frac{2}{n}.$$
\end{enumerate}
\begin{ex}
   If $G\cong D_{n}$, where $n=p_1^{a_1}\times p_2^{a_2}\times \cdots \times p_k^{a_k}$, and $p_1<p_2<\cdots<p_k$, then $$d_{CCLT}(D_{2n})=\begin{cases}
            \frac{\prod_{i=1}^{k}(a_i+1)+1}{2 \prod_{i=1}^{k}(a_i+1)-1} & \mbox{if $n$ is odd,}\\
                \frac{ \prod_{i=1}^{k}(a_i+1)}{(a_1+2) \prod_{i=2}^{k}(a_i+1)-1} & \mbox{if $n$ is even.}\\
              \end{cases}$$
\end{ex}
\begin{ex}
    If $G\cong Dic_{n}$, where $n=p_1^{a_1}\times p_2^{a_2}\times \cdots \times p_k^{a_k}$, and $p_1<p_2<\cdots<p_k$, then $$d_{CCLT}(Dic_{n})=\begin{cases}
               \frac{2 \prod_{i=1}^{k}(a_i+1)+1}{3 \prod_{i=1}^{k}(a_i+1)} & \mbox{if $n$ is odd,}\\
                \frac{(a_1+2) \prod_{i=2}^{k}(a_i+1)}{(a_1+3) \prod_{i=2}^{k}(a_i+1)-1} & \mbox{if $n$ is even.}\\
              \end{cases}$$
\end{ex}
\begin{theorem}
    Let $\mathscr{G}$ be the set of all non-cyclic finite groups. Then $$Im(d_{CCLT})=\{d_{CCLT}(G)~|~ G\in \mathscr{G}\}$$ is dense in $[0,1]$.
\end{theorem}
\begin{proof}
    Consider the group $G_p^{n}=\Z_p\times \Z_p\times \Z_{p^{n-2}}$, where $n\geq 2$. Then $|G_p^{n}|=p^n$ and $$d_{CCLT}(G_p^{n})=\frac{n-1}{n}.$$ Since the function $d_{CCLT}$ is multiplicative, then
    $$d_{CCLT}\bigg (\prod_{i=1}^kG_{p_i}^{n_i}\bigg)=\prod_{i=1}^{k}\frac{n_i-1}{n_i},$$ where $p_1,p_2,\ldots ,p_k$ are distinct prime numbers and $I=\{n_1,n_2,\ldots,n_k\}\subset \N$. Also
    $$S=\bigg\{\prod_{n\in I}^{}\frac{n-1}{n}~\bigg|~|I|<\infty\bigg\}\subseteq Im(d_{CCLT}).$$ Thus, it is sufficient to prove that $S$ is dense in $[0,1]$.\\
    Consider the sequence $(x_n)_{n\geq 2}\subset (0,\infty)$, where $x_n=\ln{\frac{n}{n-1}}$. Now, it is easy to see that $\lim_{n\to\infty}x_n=0$. Moreover,
    \[\lim_{n\to\infty}\frac{x_n}{1/n}=1.\]    Since the series $\sum_{n\geq 2}^{}\frac{1}{n}$ is divergent, we deduce that the series $\sum_{n\geq 2}^{}x_n$ is also divergent. Therefore, all the hypotheses of \cite[Lemma 3.1]{tuarnuauceanu2024clt} are satisfied. Therefore \[\overline {\bigg\{\sum_{n\in I}^{}x_n~\bigg |~I\subset{\N}^{*},|I|<\infty\bigg\}}=[0,\infty).\] This implies that \[\overline{\bigg\{\ln{\bigg(\prod_{n\in I}^{}\frac{n}{n-1}\bigg)~\bigg|~I\subset{\N}^{*}, |I|<\infty}\bigg\}}=[0,\infty).\] Also \[\overline{\bigg\{\prod_{n\in I}^{}\frac{n}{n-1}~\bigg|~I\subset{\N}^{*},|I|<\infty\bigg\}}=[1,\infty).\] Thus \[\overline{\bigg\{\prod_{n\in I}^{}\frac{n-1}{n}~\bigg|~I\subset{\N}^{*},|I|<\infty\bigg\}}=[0,1].\] 
    We deduce that \[\overline{S}=[0,1].\] This completes the proof.
\end{proof}
\section{Conclusion}\label{sec:conclusion}
In this paper, two more classes of CLT-groups, known as CCLT and ACLT groups, are defined. Moreover, we defined CCLT and ACLT numbers. A characterization of CCLT and ACLT numbers is also given. Figure~$1$ summarizes how CCLT and ACLT numbers interact with other known numbers. We proved that these special classes of CLT groups are properly contained in the class of supersolvable groups. In the continuation, we can define Nilpotent CLT groups, a finite group $G$ is said to be {\em Nilpotent CLT group,} if $G$ contains a nilpotent subgroup for every proper divisor $d$ of the order of $G$. A natural number $n$ is said to be a {\em Nilpotent CLT number} if every group of order $n$ is a nilpotent CLT group. As a future problem, one can explore the class of Nilpotent CLT groups and their properties. Also, in most of the examples, we notice that the quotient group of an ACLT group is also ACLT. We strongly believe that ACLT groups are closed with respect to the quotient, but we are unable to prove this.\\
 
 \begin{figure}[ht!]
 \centering
 \scalebox{0.6}{
\begin{tikzpicture}
\node  at (2,5.9) {CLT numbers};
\node  at (2,5) {ACLT numbers};
\node  at (2,2) {cyclic };
\node  at (2,1.5) {numbers};
\node  at (-0.5,2) {abelian};
\node  at (-0.4,1.5) {numbers};
\node  at (4.5,2) {CCLT};
\node  at (4.5,1.5) {numbers};

\draw (2,2) circle (4.3cm);

\draw (2,2) circle (3.7cm);
\draw (1,2) circle (2.3cm);
\draw (3.3,2) circle (2.3cm);
\draw (2,2) circle (1cm);
 \end{tikzpicture}}
 \caption{}
 \end{figure}
 \noindent \textbf{Acknowledgement}\\

 \noindent The first-named author is supported by the University Grant Commission (UGC), India, under the scheme UGC-SRF. We thank Dr. Rijubrata Kundu and Pragati for the helpful discussions.\\
 
\noindent\textbf{Conflict of interest}\\

\noindent The authors have no conflict of interest to declare.
  \bibliographystyle{plain}
  \bibliography{refs}

\end{document}